\documentclass[11pt]{article}       
%
%
\usepackage{graphicx}
%
%
%
%
%
\pdfpageheight11in
\pdfpagewidth8.5in
\textheight8.5in
\textwidth6.5in
\oddsidemargin0in
\topmargin-0.5in

\usepackage{mathptmx}      
\usepackage{amsmath,amssymb,amsthm}


%
\usepackage{tgtermes}
%
\usepackage{mathtools}
%

\usepackage{bbm}

\usepackage{graphicx}
\usepackage{color}


\newcommand{\Rb}{\mathbbm{R}}      

\newcommand{\Eb}{\mathbbm{E}}
\newcommand{\Fc}{\mathcal{F}}

\newcommand{\Kc}{\mathcal{K}}
\newcommand{\Lc}{\mathcal{L}}

\newcommand{\Nc}{\mathcal{N}}

\newcommand{\Tc}{\mathcal{T}}

\newcommand{\Xc}{\mathcal{X}}
\newcommand{\Yc}{\mathcal{Y}}

\newcommand{\dist}{\text{\rm dist}}
\newcommand{\db}{\text{\rm\sf d}}

\newtheorem{proposition}{Proposition}[section]
\newtheorem{theorem}[proposition]{Theorem}
\newtheorem{corollary}[proposition]{Corollary}
\newtheorem{lemma}[proposition]{Lemma}
\newtheorem{definition}[proposition]{Definition}
\newtheorem{remark}[proposition]{Remark}
\newtheorem{assumption}{Assumption}

\newenvironment{tightlist}[1]{%
    \list{{\textup{(\roman{enumi})}}}{\settowidth\labelwidth{{\textup{(#1)}}}
    \leftmargin 0pt \advance\leftmargin\labelsep \itemindent \parindent
    \parsep 0pt plus 1pt minus 1pt \topsep 0pt \itemsep 0pt
    \usecounter{enumi}}}{\endlist}

%
%
%
\begin{document}

\title{Subregular Recourse in Nonlinear Multistage Stochastic Optimization}


\author{ {Darinka Dentcheva\footnote{Stevens Institute of Technology, Department of Mathematical Sciences, Hoboken, NJ 07030, USA;  email: darinka.dentcheva{@}stevens.edu}} \\
      {Andrzej Ruszczy\'nski\footnote{Rutgers University, Department of Management Science and Information Systems, Piscataway, NJ 08854, USA;
              email: {rusz{@}rutgers.edu}}
}
}


\maketitle

\begin{abstract}
We consider nonlinear multistage stochastic optimization problems in the spa\-ces of integrable functions.
We allow for nonlinear dynamics and general objective functionals, including dynamic risk measures.
We study causal operators describing the dynamics of the system and derive the Clarke subdifferential for a
penalty function involving such operators. Then
we introduce the concept  of subregular recourse in nonlinear multistage stochastic optimization
 and establish subregularity of the resulting systems in two formulations:
with built-in nonanticipativity and with explicit nonanticipativity constraints.
Finally, we derive optimality conditions for both formulations and study their relations.

\emph{Keywords:} {Nonlinear Causal Operators, Subregularity, Nonanti\-cipativity}

\emph{Subject Classification:} 49K27, 90C15
\end{abstract}

\section{Introduction}

The concepts of metric regularity and subregularity of multifunctions are at the core of modern variational analysis, with applications to stability theory of systems of inclusions and derivation of optimality conditions. We refer the readers to the monographs \cite{dontchev2009implicit,ioffe2017variational} for a comprehensive exposition of this vast field and its applications. However, most research on these properties and their implications in infinite-dimensional spaces focuses on  fairly abstract settings in general  metric or Banach spaces.

Our objective is to concentrate on specific questions arising in  the analysis of nonlinear optimization problems in the spaces $p$-integrable functions on a probability space,
with $p\in [1,\infty)$. Such problems arise in stochastic optimization, and despite much effort
devoted to their analysis in the past, they still pose formidable theoretical challenges. These challenges are compounded by the fact that stochastic optimization models may involve complex risk functionals which cannot be
expressed as expected values of stage-wise costs.
The classical approaches, exploiting the properties of convex integral functionals, are inapplicable to such models.
Furthermore, we consider problems with nonlinear dynamics, where techniques of conjugate duality cannot be used.

Our contributions can be summarized as follows. We introduce a new concept of subregular recourse, 
and we establish subregularity
of a system of constraints in a multistage stochastic optimization problem with nonlinear dynamics in two settings: with built-in nonanticipativity and with explicit nonanticipativity constraints.
We derive exact Clarke subdifferentials of penalty functions involving causal operators.
Our main results are optimality conditions for nonlinear multistage stochastic optimization problems with general objective functions in both settings.

 The paper is organized as follows.
 In \S \ref{s:preliminaries}, we review several concepts and results on sets, tangent cones, and subregularity
 in spaces of integrable functions, which are essential for our analysis. 
 In \S \ref{s:causal}, we derive useful properties of causal operators describing the dynamics of
 the system.
 Finally, \S \ref{s:multistage} is devoted to the analysis of
 multistage stochastic optimization problems with nonlinear causal operators and general objective functionals.

\section{Preliminaries}
\label{s:preliminaries}

For a given probability space $(\varOmega,\Fc,P)$, the notation  $\Xc=\Lc_p(\varOmega,\Fc,P;\Rb^n)$ stands for the vector space of measurable functions $x:\varOmega\to\Rb^n$,
such that $\int \|x(\omega)\|^p\;P(d\omega) < \infty$, where $p\in [1,\infty)$.
We denote the norm in $\Xc$ by $\|\cdot\|$; it will be clear from the context in which space
the norm is taken. The distance function to a set $A$ in a functional space
will be denoted by $\dist(\cdot,A)$, while the distance to $B$ in a finite dimensional space
will be denoted by $\db(\cdot,B)$.

We pair the space
$\Xc$ with the space $\Xc^*= \Lc_q(\varOmega,\Fc,P;\Rb^n)$, $1/p+1/q = 1$, and
with the bilinear form
\[
\langle y, x \rangle = \int_\varOmega   y(\omega)^\top  x (\omega) \; P(d\omega), \quad y \in \Xc^*, \quad x\in \Xc.
\]
Here, $y(\omega)^\top $ refers to the transposed vector $y(\omega)\in \Rb^n.$

\begin{definition}
\label{d:tangent}
Suppose $A$ is a closed subset of $\Xc$ and $x\in A$. The \emph{contingent cone} to $A$ at $x$ is the set
\[
{\Tc}_A(x) = \big\{ v\in \Xc : \liminf_{\tau\downarrow 0}\frac{1}{\tau} \dist(x+\tau v,A)  = 0 \big\}.
\]
\end{definition}
Recall that for a cone $\Kc \subset \Xc$ its \emph{polar cone} is defined as follows:
\[
\Kc^\circ = \big\{ y \in \Xc^*: \langle y,x \rangle \le 0 \text{ for all } x \in \Kc \big\}.
\]
\begin{definition}
\label{d:derivable}
A set $A \subset \Xc$ is \emph{derivable} at $x\in A$ if  for every $v\in {\Tc}_A(x)$
\[
\lim_{\tau\downarrow 0}\frac{1}{\tau} \dist_{\Xc}(x+\tau v,A)  = 0.
\]
\end{definition}

We recall the notion of a decomposable set in $\Xc$ (cf. \cite{aubin2009set}).
\begin{definition}
\label{d:decomposable}
A  set $\Kc \subset \Xc$ is \emph{decomposable} if a measurable multifunction $K:\varOmega\rightrightarrows \Rb^n$
exists, such that
$\Kc = \big\{x \in \Xc: x(\omega) \in K(\omega)\ \text{a.s.} \big\}$.
\end{definition}

The following fact is well-known in set-valued analysis (see, e.g., \cite[Cor. 8.5.2]{aubin2009set}.

\begin{lemma}
\label{l:decomposable-tangent}
Suppose $A \subset \Xc $ is decomposable and $A(\omega)$ are closed and derivable sets for $P$-almost all $\omega\in \varOmega$. Then
\[
{\Tc}_A(x) = \big\{ v\in \Xc: \text{ for $P$-almost all }\omega,\ v(\omega)\in {\Tc}_{A(\omega)}\big(x(\omega)\big)\big\}.
\]
\end{lemma}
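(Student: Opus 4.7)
The plan is to prove the two inclusions separately, using the measurable selection/decomposability identity
\[
\dist(y,A) = \Bigl(\int_\varOmega \db\bigl(y(\omega),A(\omega)\bigr)^{p}\,P(d\omega)\Bigr)^{1/p}
\]
which holds for decomposable $A$ with $A(\omega)$ closed a.s. (the $\ge$ direction is immediate from the pointwise definition; the $\le$ direction uses a measurable selection of a near-projection). This identity is the bridge between the functional-space distance and the pointwise one, and derivability is what lets us control the remainder terms uniformly enough to integrate.

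For the inclusion $\Tc_A(x)\subset\{v: v(\omega)\in \Tc_{A(\omega)}(x(\omega))\text{ a.s.}\}$, I would take $v\in\Tc_A(x)$ and pick $\tau_k\downarrow 0$ with $\dist(x+\tau_k v,A)/\tau_k\to 0$. Using the identity above, choose measurable $x_k\in A$ with $\|x+\tau_k v-x_k\|_{\Xc}/\tau_k\to 0$, set $v_k=(x_k-x)/\tau_k$, so $v_k\to v$ in $\Xc$. Passing to a subsequence, $v_k(\omega)\to v(\omega)$ a.s. Since $x(\omega)+\tau_k v_k(\omega)=x_k(\omega)\in A(\omega)$, we estimate
\[
\frac{1}{\tau_k}\db\bigl(x(\omega)+\tau_k v(\omega),A(\omega)\bigr)\le \|v(\omega)-v_k(\omega)\|\to 0
\]
for a.e.\ $\omega$, so $v(\omega)\in\Tc_{A(\omega)}(x(\omega))$ almost surely.

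For the reverse inclusion, suppose $v(\omega)\in\Tc_{A(\omega)}(x(\omega))$ a.s. Here the derivability hypothesis becomes essential, since it upgrades the $\liminf$ in the contingent cone to a full limit: for a.e.\ $\omega$,
\[
\frac{1}{\tau}\db\bigl(x(\omega)+\tau v(\omega),A(\omega)\bigr)\xrightarrow[\tau\downarrow 0]{} 0.
\]
Because $x(\omega)\in A(\omega)$, the integrand is bounded above by $\|v(\omega)\|$, which is in $\Lc_p$. By dominated convergence,
\[
\frac{1}{\tau^{p}}\int_\varOmega \db\bigl(x(\omega)+\tau v(\omega),A(\omega)\bigr)^{p}\,P(d\omega)\to 0,
\]
and invoking the decomposability identity again gives $\dist(x+\tau v,A)/\tau\to 0$, i.e.\ $v\in\Tc_A(x)$.

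The only real obstacle is the second inclusion, and specifically the interchange of limit and integral: without derivability one only has $\liminf$ pointwise, which does not survive integration. Derivability gives true pointwise convergence, and the envelope $\|v(\omega)\|\in\Lc_p$ then legitimizes dominated convergence. The measurability of $\omega\mapsto \db(x(\omega)+\tau v(\omega),A(\omega))$ is routine since $A$ is a measurable multifunction and $x,v$ are measurable, so no additional technicality intervenes.
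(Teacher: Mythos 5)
Your proof is correct. Note that the paper itself does not prove this lemma: it is quoted as a known fact with a citation to Aubin--Frankowska (Cor.~8.5.2), so there is no in-paper argument to compare against; your write-up is essentially the standard proof of that cited result. Both halves are sound. The inclusion $\Tc_A(x)\subset\{v:\ v(\omega)\in\Tc_{A(\omega)}(x(\omega))\ \text{a.s.}\}$ in fact needs only the definition of $\dist_\Xc(\cdot,A)$ as an infimum over $A$ (to produce the $x_k$), not the integral identity, together with the extraction of an a.s.-convergent subsequence of $v_k$, which you correctly perform. The integral identity is genuinely needed in the converse direction, and its nontrivial half ($\le$) does require a measurable selection of (near-)projections onto $A(\omega)$; this selection lies in $\Xc$ because $x\in A$ gives the pointwise bound $\db(y(\omega),A(\omega))\le\|y(\omega)-x(\omega)\|$, a point worth making explicit. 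You also correctly identify that derivability is exactly what upgrades the pointwise $\liminf$ to a limit so that dominated convergence (with envelope $\|v(\omega)\|\in\Lc_p$) applies; without it the argument would indeed break at the interchange of limit and integral. No gaps.
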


Polar cones of convex decomposable cones are also decomposable.
\begin{lemma}
\label{l:polar-decomposability}
 The polar cone $\Kc^\circ$ of a decomposable cone $\Kc \subset \Xc$ is a convex decomposable cone, and
$K^\circ(\omega) = \big(K(\omega)\big)^\circ$ a.s.
\end{lemma}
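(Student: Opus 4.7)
The plan is to prove both claims by directly identifying the candidate decomposable structure $\Mc := \{y \in \Xc^* : y(\omega) \in K(\omega)^\circ \text{ a.s.}\}$ and showing $\Kc^\circ = \Mc$. Convexity of $\Kc^\circ$ is automatic, since any polar cone is the intersection of the closed half-spaces $\{y\in\Xc^*: \langle y,x\rangle \le 0\}$, $x \in \Kc$, hence closed and convex. So the substantive part is the pointwise identification, which immediately yields decomposability with the multifunction $\omega \mapsto K(\omega)^\circ$.

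The easy inclusion $\Mc \subseteq \Kc^\circ$ follows from Fubini-style pointwise reasoning: for $y \in \Mc$ and $x \in \Kc$, we have $x(\omega) \in K(\omega)$ a.s.\ and $y(\omega) \in K(\omega)^\circ$ a.s., so $y(\omega)^\top x(\omega) \le 0$ a.s., and integrating against $P$ gives $\langle y,x\rangle \le 0$.

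The reverse inclusion $\Kc^\circ \subseteq \Mc$ is the core of the argument and requires a measurable selection. I would argue by contradiction: suppose $y \in \Kc^\circ$ but the set $\varOmega_0 := \{\omega : y(\omega) \notin K(\omega)^\circ\}$ has positive probability. Since each $K(\omega)$ is a cone, $y(\omega) \notin K(\omega)^\circ$ means there exists $u \in K(\omega)$ with $y(\omega)^\top u > 0$, and by positive homogeneity we may require $\|u\| \le 1$. Define
\[
\varGamma(\omega) := \big\{ u \in K(\omega) : \|u\| \le 1,\ y(\omega)^\top u > 0 \big\}, \qquad \omega \in \varOmega_0.
\]
Measurability of $K$, together with standard intersection and openness arguments for the additional constraints, makes $\varGamma$ a measurable multifunction with nonempty values on $\varOmega_0$, so the Kuratowski--Ryll-Nardzewski selection theorem supplies a measurable selection $u(\cdot)$ on $\varOmega_0$. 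Extending by $u(\omega) = 0$ off $\varOmega_0$ (using that $0 \in K(\omega)$ a.s., as $K(\omega)$ is a closed cone) produces $u \in \Lc_\infty(\varOmega,\Fc,P;\Rb^n) \subset \Xc$ with $u \in \Kc$, yet
\[
\langle y,u \rangle = \int_{\varOmega_0} y(\omega)^\top u(\omega)\, P(d\omega) > 0,
\]
contradicting $y \in \Kc^\circ$.

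The main obstacle is the measurable selection step: ensuring that the pointwise polarity gap can be captured by a single $\Xc$-element rather than by an a.s.\ family of counterexamples. The key ingredients are measurability of $\omega \mapsto K(\omega)^\circ$, measurability of $\varGamma$, and the normalization $\|u\|\le 1$ that confines the selection to $\Lc_\infty$ and hence to $\Xc$ regardless of $p \in [1,\infty)$. Once this is in place, the decomposability follows with $K^\circ(\omega) := (K(\omega))^\circ$ and the identity $\Kc^\circ = \Mc$ simultaneously.
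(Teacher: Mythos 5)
Your proof is correct and follows essentially the same route as the paper: identify $\Kc^\circ$ with the decomposable set of a.s.\ selections of $\omega\mapsto\big(K(\omega)\big)^\circ$, get the easy inclusion by pointwise integration, and get the reverse inclusion by contradiction through a measurable selection of a direction in $K(\omega)$ witnessing the pointwise failure of polarity. Your normalization $\|u\|\le 1$ together with the fact that a strictly positive integrand on a set of positive measure has strictly positive integral is, if anything, a slightly cleaner way to reach the contradiction than the paper's ``$CP(S)-\delta\|y\|_{\Xc^*}$'' estimate; the only detail to tidy up is that $\varGamma(\omega)$ is not closed because of the strict inequality, so apply the Kuratowski--Ryll-Nardzewski theorem to a closed-valued restriction instead, e.g.\ to $\{u\in K(\omega):\|u\|\le 1,\ y(\omega)^\top u\ge m(\omega)/2\}$, where $m(\omega)$ is the (measurable, and positive on $\varOmega_0$) maximum of $y(\omega)^\top u$ over the compact set $K(\omega)$ intersected with the unit ball of $\Rb^n$.
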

\begin{proof}
Consider the convex decomposable cone $D:\varOmega\rightrightarrows \Rb^n$ defined pointwise as follows: $D(\omega) = \big(K(\omega)\big)^\circ$.
Evidently, if $y\in D$ then for all $x\in \Kc$ we have
\[
\langle y, x \rangle = \int_\varOmega  y(\omega)^\top x(\omega)\;P(d\omega) \le 0.
\]
Hence, $y\in \Kc^\circ$ and $D\subset \Kc^\circ$. We show that $\Kc^\circ = D$ by contradiction. Suppose an element $y\in \Kc^\circ$ exists, such that the event
\[
S = \big\{ y(\omega) \notin \big(K(\omega)\big)^\circ \big\}
\]
has positive probability. Then, for every $C>0$ we can find a function $x\in \Xc$ such that $x(\omega) \in K(\omega)$ and
$\langle y(\omega),x(\omega)\rangle > C$
for all $\omega\in S$. For $\omega\in \varOmega \setminus S$ we select $x(\omega)\in K(\omega)\cap B_\delta$, where $B_\delta$ is a ball in $\Rb^n$ of radius $\delta>0$.
Then
\[
\langle y, x \rangle  = \int_S  y(\omega)^\top x(\omega)\;P(d\omega) + \int_{\varOmega\setminus S} y(\omega)^\top x(\omega)\;P(d\omega)
\ge C P(S) - \delta \|y\|_{\Xc^*}.
\]
The number $C$ may be arbitrarily large, and $\delta$ may be arbitrarily small, which leads to  a contradiction. This concludes the proof.
 \end{proof}

We recall the subregularity concept regarding set-constrained systems. For thorough treatment of regularity conditions, we refer the reader  to \cite{kruger2018set,cuong2019nonlinear} and the references therein. For a multifunction
$\mathfrak{H}:\Xc \rightrightarrows \Yc$,  where $\Yc$ is a Banach space, we consider the relation
\begin{equation}
\label{system-set}
0 \in \mathfrak{H}(x).
\end{equation}

\begin{definition}
\label{d:subregularity-gen}
The multifunction $\mathfrak{H}$ is \emph{subregular} at $\hat{x}\in \Xc$ with $0 \in \mathfrak{H}(\hat{x})$, if $\delta>0$
and $C>0$ exist such that for all $x\in \Xc$ with $\|x - \hat{x}\|_{\Xc}\le \delta$ a point $\tilde{x}$ satisfying \eqref{system-set} exists such that
\[
\|\tilde{x}- x \|_{\Xc} \le C \dist_{\Yc}(0, \mathfrak{H}(x)).
\]
\end{definition}

In our analysis of multistage stochastic optimization problems, we shall use systems of the form
\begin{equation}
\label{F-incl}
F(x)\in Y,
\end{equation}
where $\Yc$ is an $\Lc_p$-space,  $F:\Xc\to\Yc$ is Lipschitz continuous, and $Y\subset \Yc$ is a closed convex set.
With the multifunction $\mathfrak{H} = F(x) - Y$, the property of subregularity of \eqref{F-incl} means that
a constant $C$ exists, such that for all $x$ in a neighborhood of $\hat{x}$,
\[
\dist\big(x,F^{-1}(Y)\big) \le C\, \dist\big(F(x),Y\big).
\]

\section{Causal Operators}
\label{s:causal}

We are interested in nonlinear operators acting between two spaces of sequences of integrable functions.
For a probability space $(\varOmega,\Fc,P)$ with filtration $\{\emptyset,\varOmega\}=\Fc_1\subset \Fc_2 \subset \cdots \subset \Fc_T=\Fc$,
we define the spaces $\Xc_t = \Lc_p(\varOmega,\Fc_t,P;\Rb^n)$  and $\Yc_t = \Lc_p(\varOmega,\Fc_t,P;\Rb^m)$ with $p\in [1,\infty)$, $t=1,\dots,T$.
Let $\Xc = \Xc_1\times \dots \times \Xc_T$ and $\Yc = \Yc_1\times \dots\times \Yc_T$.
We use $x_{1:t}$ as a shorthand notation for $(x_1,\dots,x_t)$, and $\Xc_{1:t}$ for $\Xc_1\times\dots \times\Xc_t$.

We adapt the following concept from the dynamical system theory
(see \cite{corduneanu2005functional} and the references therein).

\begin{definition}
\label{d:causal}
An operator $F:\Xc\to\Yc$ is \emph{causal}, if  functions $f_t:\Rb^{nt}\times \varOmega\to \Rb^{m}$ exist, such that for all $t=1,\dots,T$
\begin{equation}
\label{causal}
F_t(x)(\omega) = f_t(x_{1:t}(\omega),\omega),\quad \omega\in \Omega,
\end{equation}
and each $f_t(\cdot,\cdot)$ is superpositionally measurable.
\end{definition}

Superpositional measurability is discussed in detail in \cite{appell1990nonlinear}; this property
is guaranteed for Carath\'eodory functions, in particular,
for functions that satisfy the assumption below (\emph{op. cit.}, Thm. 1.1).

\begin{assumption}
\label{a:ft}
For all $t=1,\dots,T$:
\begin{tightlist}{iii}
\item $f_t(\xi,\cdot)$ is an element of $\Yc_t$ for all $\xi\in \Rb^{nt}$;
\item For almost all $\omega\in\varOmega$, $f_t(\cdot,\omega)$ is continuously differentiable
with respect to its first argument,  with the Jacobian  $f'_t(\cdot,\omega)$;
\item A constant $C_f$ exists,
such that $\| f'_t(\cdot,\omega)\| \le C_f$, almost surely.
\end{tightlist}
\end{assumption}
Under Assumption \ref{a:ft}, each
$F_t$ given by \eqref{causal} indeed maps the product space $\Xc_{1:t}$ into a subset  of $\Yc_t$.

Notice that each Jacobian $f'_t(x_{1:t}(\omega),\omega)$ acts on the realization of the subvector $h_{1:t}(\omega)$ of an element $h\in \Xc$.
For simplicity, we use the same notation as if it were acting on the entire $h(\omega)$. Then we can write
\[
f'(x(\omega),\omega) = \big\{f_t'(x_{1:t}(\omega),\omega)\big\}_{t=1,\dots,T}
\]
to represent the Jacobian of $[F(x)](\omega)$ with respect to $x(\omega)$.
\begin{lemma}
\label{l:F-diff}
If  Assumption \ref{a:ft} is satisfied, then  $F(\cdot)$ is G\^{a}teaux differentiable with the derivative $F'(x)$ defined by
\begin{equation}
\label{F-Jacobian}
[F'(x)\, h](\omega) = f'(x(\omega),\omega)\,h(\omega), \quad \omega\in \varOmega.
\end{equation}
\end{lemma}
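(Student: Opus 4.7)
The plan is to verify G\^{a}teaux differentiability componentwise. Fix $x,h\in\Xc$ and $t\in\{1,\dots,T\}$. For $\tau\neq 0$, I would consider the $\Yc_t$-valued difference quotient
\[
\varDelta_t(\tau,\omega)=\frac{1}{\tau}\bigl[f_t(x_{1:t}(\omega)+\tau h_{1:t}(\omega),\omega)-f_t(x_{1:t}(\omega),\omega)\bigr],
\]
and show it converges in $\Yc_t$ to $f_t'(x_{1:t}(\omega),\omega)\,h_{1:t}(\omega)$ as $\tau\downarrow 0$. Adding the components $t=1,\dots,T$ and recalling the notation introduced before the lemma will then yield \eqref{F-Jacobian}.

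The pointwise ingredient is immediate from Assumption \ref{a:ft}(ii): for $P$-almost every $\omega$, $f_t(\cdot,\omega)$ is continuously differentiable, so the classical finite-dimensional definition of the derivative gives $\varDelta_t(\tau,\omega)\to f_t'(x_{1:t}(\omega),\omega)\,h_{1:t}(\omega)$ as $\tau\downarrow 0$. The superpositional measurability of $f_t'$ (itself a Carath\'eodory map under \ref{a:ft}(ii)--(iii)) together with measurability of $x$ and $h$ ensures that the pointwise limit is a measurable $\Rb^m$-valued function, and Assumption \ref{a:ft}(iii) gives $\|f_t'(x_{1:t}(\omega),\omega)\,h_{1:t}(\omega)\|\le C_f\|h_{1:t}(\omega)\|$, so the candidate derivative lies in $\Yc_t$.

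To lift pointwise convergence to convergence in the $\Yc_t$ norm, I would apply the dominated convergence theorem to $\|\varDelta_t(\tau,\cdot)\|^p$. The integral mean value inequality, combined with the uniform Jacobian bound $\|f_t'\|\le C_f$ from Assumption \ref{a:ft}(iii), yields
\[
\|\varDelta_t(\tau,\omega)\|\le C_f\,\|h_{1:t}(\omega)\|\quad\text{a.s.},
\]
and the right-hand side belongs to $\Lc_p(\varOmega,\Fc_t,P;\Rb)$ because $h_{1:t}\in\Xc_{1:t}$. Hence $C_f^p\|h_{1:t}(\cdot)\|^p$ is an integrable majorant, and dominated convergence gives $\varDelta_t(\tau,\cdot)\to f_t'(x_{1:t}(\cdot),\cdot)\,h_{1:t}(\cdot)$ in $\Yc_t$.

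The main obstacle is securing an integrable majorant that is independent of the parameter $\tau$; Assumption \ref{a:ft}(iii) is exactly what is needed for this, since a merely pointwise finite Jacobian would leave open whether the difference quotients remain uniformly controlled by an $\Lc_p$ function of $\omega$. With that bound in hand, the rest is a routine application of dominated convergence and assembly across the $T$ components to obtain \eqref{F-Jacobian}.
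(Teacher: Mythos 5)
Your proposal is correct and follows essentially the same route as the paper: pointwise convergence of the difference quotient from Assumption~\ref{a:ft}(ii), a $\tau$-independent integrable majorant $C_f\|h(\omega)\|$ (the paper uses $2C_f\|h(\omega)\|$ for the quotient minus the candidate derivative) obtained from the mean value inequality and Assumption~\ref{a:ft}(iii), and the dominated convergence theorem to pass from a.s.\ convergence to convergence in the $\Lc_p$ norm. The only cosmetic difference is that you argue componentwise in $t$ while the paper treats the full vector $f$ at once, and strictly speaking the majorant should be applied to $\|\varDelta_t(\tau,\cdot)-f_t'(x_{1:t}(\cdot),\cdot)h_{1:t}(\cdot)\|^p$, which your bound covers with the constant $2C_f$.
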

\begin{proof}
We define $J(x):\Xc \to \Yc$ by using the right hand side of formula \eqref{F-Jacobian}:
\[
[J(x)\,h](\omega) = f'(x(\omega),\omega)\,h(\omega), \quad \omega\in \varOmega.
\]
 Notice that $J(\cdot)$ is a continuous linear operator.

We calculate the directional derivative of  the function $F$ at $x$ in the direction $h$. First, we observe that for any $h\in\Xc$
and $\tau>0$
\[
\frac{1}{\tau} \big\|f(x(\omega)+\tau h(\omega),\omega) - f(x(\omega),\omega)- \tau f'(x(\omega),\omega)h(\omega)\|\leq 2C_f\|h(\omega)\|\quad \text{a.s.}
\]
and the function at the right-hand side is $p$-integrable. This yields the following estimate:
\begin{align*}
\lefteqn{\frac{1}{\tau} \big\| F(x+\tau h)- F(x) - \tau J(x)\,h\big\|_{\Yc}} \\
&=
 \bigg( \int \big\|\frac{1}{\tau}\big( f(x(\omega)+\tau h(\omega),\omega) - f(x(\omega),\omega)- \tau f'(x(\omega),\omega)h(\omega)\big)\big\|^p \; P(d\omega)\bigg)^{1/p}\\
&\leq   2C_f\bigg( \int  \|h(\omega)\|^p  \; P(d\omega)\bigg)^{1/p} =  2C_f \|h\|_\Yc.
  \end{align*}
Using Lebesgue's dominated convergence theorem, we obtain
\begin{align*}
\lefteqn{\lim_{\tau\downarrow 0} \frac{1}{\tau} \big\| F(x+\tau h)- F(x) - \tau J(x)\,h\big\|_{\Yc}= }\\
&
 \bigg( \int \lim_{\tau\to 0}\big\|\frac{1}{\tau}\big( f(x(\omega)+\tau h(\omega),\omega) - f(x(\omega),\omega)- \tau f'(x(\omega),\omega)h(\omega)\big)\big\|^p \; P(d\omega)\bigg)^{1/p} 
 =0.
\end{align*}
Therefore, $J(x)$ is the G\^ateaux  derivative of $F(\cdot)$ at $x$.

\end{proof}

It is worth mentioning that our assumptions do not guarantee the Fr\'echet differentiability of $F(\cdot)$.
Nonetheless, in the next result, we are able to calculate the Clarke subdifferential of the function
\begin{equation}
\label{Phi}
\varPhi(\cdot)= \dist\big( F({\cdot}) ,Y\big).
\end{equation}

\begin{theorem}
\label{t:Clarke-incl}
Suppose $Y\subset \Yc$ is convex and closed, $F(x)\in Y$, and Assumption \ref{a:ft} is satisfied.
Then
\[
\partial \varPhi(x) = \big[F'({x})\big]^* \, \big(N_{Y}(F(x)) \cap \mathbb{B}_{\Yc^*}\big),
\]
where $\big[F'({x})\big]^*$ is the adjoint operator to the G\^{a}teaux derivative $F'(x)$, and $\mathbb{B}_{\Yc^*}$ is the closed unit ball in $\Yc^*$.
\end{theorem}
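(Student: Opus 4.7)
I will view $\varPhi$ as the composition $d_Y\circ F$, where $d_Y(\cdot)=\dist(\cdot,Y)$ is convex and $1$-Lipschitz. Since $F(x)\in Y$, standard convex analysis gives $\partial d_Y(F(x)) = N_Y(F(x))\cap\mathbb{B}_{\Yc^*}$, so the claim has chain-rule form. Because Assumption \ref{a:ft} does not guarantee Fr\'echet (hence strict) differentiability of $F$, Clarke's standard chain rule is not directly applicable. Instead, the plan is to estimate the generalized directional derivative
\[
\varPhi^\circ(x;v)=\limsup_{y\to x,\,t\downarrow 0}\frac{\varPhi(y+tv)-\varPhi(y)}{t}
\]
from above and below, and then recover $\partial\varPhi(x)=\{\xi\in\Xc^*:\langle\xi,v\rangle\le\varPhi^\circ(x;v)\ \forall v\in\Xc\}$ by duality. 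Assumption \ref{a:ft}(iii) and the pointwise fundamental theorem of calculus make each $f(\cdot,\omega)$ globally $C_f$-Lipschitz, so $F$ is $C_f$-Lipschitz and $\varPhi$ is Lipschitz.

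\textbf{Upper estimate and the inclusion $\subseteq$.} I split
\[
\varPhi(y+tv)-\varPhi(y) = \bigl[d_Y(F(y+tv))-d_Y(F(y)+tF'(x)v)\bigr] + \bigl[d_Y(F(y)+tF'(x)v)-d_Y(F(y))\bigr].
\]
The $1$-Lipschitz property of $d_Y$ bounds the first bracket by $\|F(y+tv)-F(y)-tF'(x)v\|_{\Yc}$. Applying the pointwise fundamental theorem to each $f_t(\cdot,\omega)$ yields
\[
\tfrac{1}{t}\bigl[F(y+tv)-F(y)-tF'(x)v\bigr](\omega)=\int_0^1\bigl[f'(y(\omega)+stv(\omega),\omega)-f'(x(\omega),\omega)\bigr]v(\omega)\,ds,
\]
bounded pointwise by $2C_f\|v(\omega)\|$. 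For any sequence $(y_n,t_n)\to(x,0+)$, a subsequence of $y_n$ converges to $x$ almost surely; continuity of $f'(\cdot,\omega)$ and Lebesgue's dominated convergence theorem then force the $\Yc$-norm above to vanish along the subsequence, hence along the whole sequence. For the second bracket, writing $z=F(y)\to F(x)$ and using the Clarke-regularity of the convex continuous function $d_Y$, I obtain
\[
\limsup_{y\to x,\,t\downarrow 0}\frac{d_Y(F(y)+tF'(x)v)-d_Y(F(y))}{t}\le d_Y'(F(x);F'(x)v)=\sup_{\zeta\in N_Y(F(x))\cap\mathbb{B}_{\Yc^*}}\langle\zeta,F'(x)v\rangle.
\]
Together, $\varPhi^\circ(x;v)\le\sigma_S(v)$, where $S:=[F'(x)]^*(N_Y(F(x))\cap\mathbb{B}_{\Yc^*})$. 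Since $N_Y(F(x))\cap\mathbb{B}_{\Yc^*}$ is weak-$*$ compact (Banach--Alaoglu) and the adjoint $[F'(x)]^*$ is weak-$*$ to weak-$*$ continuous, $S$ is weak-$*$ compact, in particular weak-$*$ closed and convex, and Hahn--Banach separation yields $\partial\varPhi(x)\subseteq S$.

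\textbf{Lower estimate and the inclusion $\supseteq$.} Fix $\zeta\in N_Y(F(x))\cap\mathbb{B}_{\Yc^*}=\partial d_Y(F(x))$. From $d_Y(F(x))=0$, the subgradient inequality for $d_Y$, and Lemma \ref{l:F-diff},
\[
\varPhi(x+tv)=d_Y(F(x+tv))\ge\langle\zeta,F(x+tv)-F(x)\rangle=t\langle\zeta,F'(x)v\rangle+o(t),
\]
the $o(t)$ remainder coming from the G\^ateaux expansion and absorbed by $\|\zeta\|_{\Yc^*}\le 1$. Dividing by $t$, letting $t\downarrow 0$, and taking $y=x$ inside the $\limsup$ defining $\varPhi^\circ(x;v)$ gives $\varPhi^\circ(x;v)\ge\langle[F'(x)]^*\zeta,v\rangle$ for all $v\in\Xc$, whence $[F'(x)]^*\zeta\in\partial\varPhi(x)$. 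This proves $S\subseteq\partial\varPhi(x)$ and completes the identity. The main technical obstacle is the vanishing of the first bracket in the upper estimate: the G\^ateaux remainder must be shown to disappear uniformly along every sequence $(y_n,t_n)\to(x,0+)$ in $\Xc\times\Rb_+$, requiring a passage from $L^p$ convergence to a.s. convergence along subsequences before dominated convergence can be applied with the uniform bound from Assumption \ref{a:ft}(iii).
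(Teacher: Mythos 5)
Your proof is correct and arrives at the same identity, but the technical route differs from the paper's at the key step, namely the upper bound on the generalized directional derivative. The paper keeps the full nonlinear difference quotient $\tfrac{1}{\tau_k}(F(z_k+\tau_k h)-F(z_k))$ paired with subgradients $g_k \in \partial\dist(F(z_k+\tau_k h),Y)$ taken at the \emph{perturbed} points; it represents the quotient via the pointwise mean value theorem, shows the resulting error $\varDelta_k\to 0$ in $\Yc$ by a Banach--Alaoglu argument applied to the indicator functions of the exceptional events, and then invokes the norm-to-weak$^*$ upper semicontinuity of the convex subdifferential map (Phelps) to force the weak$^*$ limit of the $g_k$ into $\partial\dist(F(x),Y)$. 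You instead split off the linearization error $F(y+tv)-F(y)-tF'(x)v$, absorb it with the $1$-Lipschitz property of $d_Y$ together with an a.s.-subsequence/dominated-convergence argument (using the fundamental theorem of calculus and the uniform bound $\|f'\|\le C_f$ in place of the mean value theorem), and treat the remaining term purely within convex analysis: Clarke regularity of $d_Y$ gives $d_Y^{\circ}=d_Y'$, so the limsup over moving base points $F(y)\to F(x)$ is dominated by the support function of $N_Y(F(x))\cap\mathbb{B}_{\Yc^*}$. This bypasses both the mean value theorem and the upper semicontinuity of the subdifferential map; what it requires in exchange --- uniformity of the G\^ateaux remainder over base points $y\to x$, which Assumption~\ref{a:ft}(ii)--(iii) delivers --- you identify and handle correctly. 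The lower bounds in the two proofs are essentially identical (subgradient inequality at $F(x)$ plus Lemma~\ref{l:F-diff}), as is the final passage from the two-sided estimate on $\varPhi^{\circ}(x;\cdot)$ to the set identity via weak$^*$ compactness of $[F'(x)]^*\big(N_Y(F(x))\cap\mathbb{B}_{\Yc^*}\big)$ and separation. Your variant is somewhat more modular, since the measure-theoretic work is confined to one norm estimate and the convex-analytic work to standard facts about $d_Y$.
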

\begin{proof}
Since $Y$ is convex, the function $\dist(\cdot,Y)$ is convex as well, and we can use the subgradient inequality:
\[
 \dist_{\Yc} \big( F(z + \tau h),Y\big) - \dist_{\Yc} \big( F(z),Y\big) \le \big\langle g,  F(z + \tau h) - F(z)\big\rangle,
\]
for any $g \in \partial \dist(y,Y)$ at $y=F(z+\tau h)$.
The Clarke directional derivative of $\varPhi(\cdot)$ at $x$ in the direction $h$ can thus be bounded from above as follows:
\begin{equation}
\label{Clarke-der-incl}
\varPhi^0(x;h) = \sup_{\genfrac{}{}{0pt}{1}{z\to x}{\tau \downarrow 0}} \frac{1}{\tau}
\Big( \dist_{\Yc} \big( F(z + \tau h),Y\big) - \dist_{\Yc} \big( F(z),Y\big)\Big) 
\le \sup_{\genfrac{}{}{0pt}{1}{z\to x}{\tau \downarrow 0}}
\Big\langle g, \frac{1}{\tau}\big( F(z + \tau h) - F(z)\big)\Big\rangle, \quad{~}
\end{equation}
for any $g \in \partial \dist(F(z+\tau h),Y).$
Consider arbitrary sequences $\{z_k\} \to x$ and $\{\tau_k\}\downarrow 0$. By the mean value theorem, for each $\omega \in \varOmega$, each component of the quotient on the right hand side of \eqref{Clarke-der-incl} can be expressed as follows:
\[
\frac{1}{\tau_k} \big[f_j(z_k(\omega) + \tau_k h(\omega),\omega) - f_j(z_k(\omega),\omega))\big] = f'_j(\bar{z}_{k,j}(\omega),\omega)\,h(\omega),\quad j=1,\dots,mT,
\]
where $\bar{z}_{k,j}(\omega)  = z_k(\omega) + \tau_k \theta_{k,j}(\omega)  h(\omega)$ with  $\theta_{k,j}(\omega) \in [0,1]$.
Then
\begin{equation}
\label{postulated-error-incl}
\frac{1}{\tau_k} \big[f(z_k(\omega) + \tau_k h(\omega),\omega) - f(z_k(\omega),\omega))\big] = [F'(x)\,h](\omega)+ \varDelta_k(\omega),
\end{equation}
with the error $\varDelta_k(\omega)$ having coordinates
\[
\varDelta_{k,j}(\omega) =  \big[ f'_j(\bar{z}_{k,j}(\omega),\omega)- f'_j(x(\omega),\omega)\big] h(\omega),\quad j=1,\dots,mT.
\]
We shall verify that $\{\varDelta_k\}\to 0$ in $\Yc$\!.
 For an arbitrary $\varepsilon>0$ we define the events
\[
\varOmega_{k,\epsilon} = \Big\{\omega\in \varOmega: \max_{1 \le j \le mT} \| \bar{z}_{k,j}(\omega) - x(\omega)\| > \varepsilon \Big\}.
\]
Since $\{\bar{z}_{k,j}\} \to x$ in $\Yc$\!, as $k\to\infty$, the convergence in probability follows:
\begin{equation}
\label{P-Omega-incl}
\lim_{k\to\infty} P\big[\varOmega_{k,\epsilon}\big] =0.
\end{equation}
Let
\[
\delta(\varepsilon,\omega) =  \sup_{\|w - x(\omega)\|\le \varepsilon\ }\max_{ 1 \le j \le mT} \big\| f'_j(w,\omega)- f'_j(x(\omega),\omega)\big\|.
\]
By the boundedness and continuity of the derivatives, $\delta(\varepsilon,\omega) \le 2 C_f$, and $\delta(\varepsilon,\omega) \to 0$ a.s., when $\varepsilon \downarrow 0$.
The error from our desired representation of the differential quotient can be bounded as follows:
\begin{equation}
\label{Delta-omega-incl}
\|\varDelta_k(\omega)\|
\le 2C_f  \mathbbm{1}_{\varOmega_{k,\varepsilon}}(\omega)\| h(\omega)\| + \delta(\varepsilon,\omega)  \mathbbm{1}_{{\varOmega}^c_{k,\varepsilon}}(\omega)\| h(\omega)\|.
\end{equation}
Consider the first term on the right hand side of \eqref{Delta-omega-incl}. Suppose that with some $\alpha>0$,
\begin{equation}
\label{Omega-delta-incl}
\int \mathbbm{1}_{\varOmega_{k,\varepsilon}} \|h(\omega)\|^p \;P(d\omega) > \alpha, \quad \text{for}\quad k\in \Kc \subset \Nc,
\end{equation}
where the set of indices $\Kc$ is infinite. By the Banach--Alaoglu theorem \cite[Ch.VII,{\S}7]{banach1932theorie}, the sequence $\big\{ \mathbbm{1}_{\varOmega_{k,\varepsilon}}\big\}_{k\in \Kc}$ of elements in the unit ball
of $\Lc_\infty(\varOmega,\Fc,P)$ must have a weakly$^*$ convergent subsequence, indexed by $k \in \Kc_1 \subset \Kc$. By \eqref{P-Omega-incl}, its weak$^*$ limit is zero. Consequently,
\[
\lim_{\genfrac{}{}{0pt}{1}{k\to \infty}{k \in \Kc_1}}\int \mathbbm{1}_{\varOmega_{k,\varepsilon}} \|h(\omega)\|^p \;P(d\omega) = 0,
\]
which contradicts \eqref{Omega-delta-incl}. Therefore, for any $\alpha>0$, the inequality \eqref{Omega-delta-incl} may be satisfied only finitely many times, and thus
$ \mathbbm{1}_{\varOmega_{k,\varepsilon}} h\to 0$ in $\Yc$.

Combining this with \eqref{Delta-omega-incl}, we obtain (in the space $\Yc$)
\[
\limsup_{k\to \infty} \big\|\varDelta_k\big\| \le \bigg( \int \big(\delta(\varepsilon,\omega)  \| h(\omega)\|\big)^p\;P(d\omega)\bigg)^{1/p}.
\]
Letting $\varepsilon\downarrow 0$ and using the Lebesgue dominated convergence theorem, we conclude that  $\Delta_k\to 0$ in $\Yc$.

For arbitrary $g_k \in \partial \dist(F(z_k+\tau_k h),Y)$, in view of \eqref{postulated-error-incl},
\[
\varPhi^0(x;h)  \le  \sup_{\genfrac{}{}{0pt}{1}{z_k\to x}{\tau_k \downarrow 0}}
\Big\langle g_k, \frac{1}{\tau_k} \big( F(z_k + \tau_k h) - F(z_k) \big) \Big\rangle
\le  \sup_{\genfrac{}{}{0pt}{1}{z_k\to x}{\tau_k \downarrow 0}}
\big\langle g_k, F'(x)h + \varDelta_k \big\rangle.
\]
All subgradients $g_k$ are bounded by the Lipschitz constant 1 of the distance function. Therefore, $\langle g_k , \Delta_k\rangle \to 0$.
Consider an arbitrary accumulation point $\alpha$ of the sequence $\big\langle g_k, F'(x)\,h\big\rangle$.
By the Banach--Alaoglu theorem, we can choose a sub-subsequence
$\{g_k\}_{k\in \Kc}$ which is weakly$^*$ convergent to some $g$ in $\Yc^*$.
 Then $\alpha = \big\langle g, F'(x)\,h\big\rangle$.
 By the norm-to-weak$^*$ upper semicontinuity
 of the subdifferential \cite[Prop. 2.5]{phelps2009convex}, $g \in \partial \dist(F(x),Y)$. Therefore,
\begin{equation}
\label{F0-upper}
\varPhi^0(x;h)  \le \max_{g \in\partial \dist(F(x),Y)} \big\langle g, F'(x)\,h\big\rangle.
\end{equation}
The converse inequality follows from \eqref{Clarke-der-incl} by setting $z=x$ and using Lemma~\ref{l:F-diff}:
\begin{multline*}
\varPhi^0(x;h) \ge \limsup_{\tau \downarrow 0}  \frac{1}{\tau}
\Big( \dist_{\Yc} \big( F(x + \tau h),Y\big) - \dist_{\Yc} \big( F(x),Y\big)\Big)\\
\ge \limsup_{\tau \downarrow 0} \frac{1}{\tau} \langle g, F(x + \tau h)-F(x)\rangle = \big\langle g, F'(x)\,h\big\rangle,
\end{multline*}
for any $g \in\partial \dist(F(x),Y)$.
Therefore,
\[
\varPhi^0(x;h)  \ge \max_{g \in\partial \dist(F(x),Y)} \big\langle g, F'(x)\,h\big\rangle.
\]
Combining this with \eqref{F0-upper}, we infer that
\[
\varPhi^0(x;h)  = \max_{g \in\partial \dist(F(x),Y)} \big\langle [F'(x)]^* g, h\big\rangle.
\]
Since $\varPhi^0(x;h)$ is the support function of  $\partial \varPhi(x)$ (\emph{cf.} \cite[Proposition 2.1.2]{clarke1990optimization}) and the support function provides a unique description of a weakly$^*$ closed and convex set, we conclude that
\[
\partial \varPhi(x)= \big\{ [F'(x)]^* g: g \in\partial \dist(F(x),Y)\big\} .
\]
Having in mind that $\partial \dist(y,Y) = N_{Y}(y)\cap \mathbb{B}$ whenever $y\in Y$, we obtain the stated result.

\end{proof}

\begin{remark}
The causality of the operator $F$ does not play a role in the proof of
Theorem \ref{t:Clarke-incl}. The result is true for any superposition operator $\bar{F}:\Lc_p(\varOmega,\Fc,P;\Rb^n)\to\Lc_p(\varOmega,\Fc,P;\Rb^m)$, defined by
$\bar{F}(x)(\omega) = f(x(\omega),\omega)$, whenever $f$ satisfies conditions (i)--(iii) of Assumption 1.
\end{remark}

\section{Multistage Stochastic Optimization and Nonanticipativity}
\label{s:multistage}

We study nonlinear multistage stochastic optimization with general objective functionals which include dynamic measures if risk. The multistage problems can be formulated in two different ways regarding the way implementability of the solution is reflected in the model. One possibility is to formulate the model in such a way that the definition of the decision spaces includes the $\Fc_t$-measurability of the decisions at time $t$, $t=1,\dots,T$.
In another formulation, we
consider decision spaces of $\Fc$-measurable decisions at each stage, but add additional linear constraints enforcing $\Fc_t$-measurability.

\subsection{The Model  with Build-In Nonanticipaticity}
\label{s:MSP-simple}

A probability space $(\varOmega,\Fc,P)$ with filtration $\{\emptyset,\varOmega\}=\Fc_1\subset \Fc_2 \subset \cdots \subset \Fc_T=\Fc$ is given.
At each stage $t=1,\dots,T$, a decision $x_t$ with values in $\Rb^n$ is made. We require that  $x_t$ is an element
of the space $\Xc_t = \Lc_p(\varOmega,\Fc_t,P;\Rb^n)$ with $p\in [1,\infty)$. We define the space $\Xc=\Xc_1\times \cdots \times \Xc_T$.
We denote the spaces in which our dynamics operators will take values by  $\Yc_t=\Lc_{p}(\varOmega,\Fc_t,P;\Rb^m)$,  $t=1,\dots,T$.


The dynamics of the system is represented by the relation
\begin{equation}
\label{F-Y}
F(x) \in Y,
\end{equation}
where $F:\Xc \to \Yc$ is a causal operator, and $Y = Y_1\times \dots \times Y_T$, with each $Y_t:  \varOmega \rightrightarrows \Rb^m$, $t=1,\dots,T$, being an $\Fc_t$-measurable multifunction with convex and closed values.
In a more explicit way, the relation \eqref{F-Y} has the form:
\begin{equation}
\label{F-Yt}
F_t(x_{1:t}) \in Y_t,\quad t=1,\dots,T,
\end{equation}
and, due to the causality of $F(\cdot)$ and the decomposability of $Y$,
\[
f_t(x_{1:t}(\omega),\omega) \in Y_t(\omega),\quad t=1,\dots,T,\quad \omega \in \varOmega.
\]
Additionally, $\Fc_t$-measurable mulitifunctions with closed convex images
 $X_t : \varOmega \rightrightarrows \Rb^n$, $t=1,\dots,T$, are defined.

The objective function is a Lipschitz continuous functional $\varphi:\Xc\to \Rb$. The multistage stochastic optimization problem is formulated as follows:
\begin{align}
\min\  &  \varphi(x_{1:T}) \label{spn-1}\\
\text{s.t.}\
&F_t(x_{1:t}) \in Y_t \quad \text{a.s.},\quad t=1,\dots,T, \label{spn-2}\\
& x_t \in X_t\quad \text{a.s.},\quad t=1,\dots,T. \label{spn-3}
\end{align}
Evidently, we could have aggregated the relations \eqref{spn-2} and \eqref{spn-3} into one inclusion, but it is convenient
to distinguish between the causal relations describing the dynamics of the system, and the stage-wise constraints.

Current theory of stochastic optimization  provides
optimality conditions for convex versions of problem \eqref{spn-1}--\eqref{spn-2}, with linear operators $F_t(\cdot)$
and expected value functionals ${\varphi}(x_1,\dots,x_T) = \Eb\big[ \sum_{t=1}^T c_t(x_t(\omega),\omega)\big]$, involving convex integrands $c_t(\cdot,\cdot)$, see
\cite{eisner1975duality,rockafellar1976stochastic-a,rockafellar1976stochastic-b,rockafellar1976stochastic-c,evstigneev1976lagrange,ruszczynski2003stochastic,outrata2005optimality}.

We expand the theory by allowing non-linear dynamics and more general functionals in the model description.

We use uniform parametric subregularity of deterministic
 set-constrained systems associated with each stage $t=1,\dots,T$ and each elementary event $\omega\in \varOmega$:
\begin{gather}
f_t(\zeta_{1:t-1},\xi,\omega) \in Y_t(\omega), \label{sub-recourse}\\
  \xi \in X_t(\omega). \label{sub-recourse2}
\end{gather}
Here, $\zeta_{1:t-1} \in \Rb^{n(t-1)}$ representing the history of decisions at the particular elementary event,
and the elementary event $\omega\in \varOmega$ itself are parameters of the system.
For uniformity of notation, for $t=1$ the parameter $\zeta_{1:t-1}$ is non-existent. 

We introduce the following concept.
\begin{definition}
\label{d:sub-recourse}
The system \eqref{sub-recourse}--\eqref{sub-recourse2} \emph{admits complete subregular recourse}, if a constant $C$ exist, such that for almost all $\omega\in \varOmega$, every $\zeta_{1:t-1}\in X_{1:t-1}(\omega)$ and every $\eta\in \Rb^n$, a solution $\xi$ of \eqref{sub-recourse}--\eqref{sub-recourse2} exists, satisfying the inequality
\[
\|\xi - \eta\| \le C \big( \db(f_t(\zeta_{1:t-1},\eta,\omega),Y_t(\omega))+ \db(\eta,X_t(\omega)) \big).
\]
\end{definition}

We shall prove subregularity of the entire system of constraints \eqref{spn-2}--\eqref{spn-3} when complete subregular recourse is admitted.

\begin{theorem}
\label{t:simple-subregularity}
 If the system \eqref{sub-recourse}--\eqref{sub-recourse2} admits
 complete subregular recourse,
then the system \eqref{spn-2}--\eqref{spn-3} is subregular at any feasible point $\hat{x}= (\hat{x}_1,\dots,\hat{x}_T)$.
\end{theorem}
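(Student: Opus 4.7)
The plan is a stage-by-stage measurable-selection induction. Given $x\in\Xc$, I build a feasible $\tilde{x}\in\Xc$ componentwise, so that the correction $\tilde{x}_t-x_t$ at stage $t$ is controlled by the stage-$t$ residuals of $x$ together with errors propagated from the earlier corrections. Since complete subregular recourse is a global property, no neighborhood restriction on $x$ relative to $\hat{x}$ will be required in the argument, and the resulting subregularity constant will depend only on $C$, $C_f$, and $T$.

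At the inductive step, assume $\tilde{x}_{1:t-1}$ has been constructed so that each $\tilde{x}_s\in\Xc_s$ is $\Fc_s$-measurable, $\tilde{x}_s\in X_s$ a.s., and $f_s(\tilde{x}_{1:s}(\omega),\omega)\in Y_s(\omega)$ a.s. I apply Definition~\ref{d:sub-recourse} with parameters $\zeta_{1:t-1}=\tilde{x}_{1:t-1}(\omega)$ and $\eta=x_t(\omega)$: this yields, for almost every $\omega$, a point $\xi\in X_t(\omega)$ with $f_t(\tilde{x}_{1:t-1}(\omega),\xi,\omega)\in Y_t(\omega)$ and
\[
\|\xi-x_t(\omega)\|\le C\bigl(\db(f_t(\tilde{x}_{1:t-1}(\omega),x_t(\omega),\omega),Y_t(\omega))+\db(x_t(\omega),X_t(\omega))\bigr).
\]
A Kuratowski--Ryll-Nardzewski-type selection theorem, applied to the $\Fc_t$-measurable closed-valued multifunction of admissible $\xi$'s (relaxed to constant $C+\varepsilon$ if needed to secure non-emptiness of the selection set), produces an $\Fc_t$-measurable selector $\tilde{x}_t$ satisfying the same pointwise estimate; the bound itself then guarantees $\tilde{x}_t\in\Xc_t$.

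To pass from the pointwise to the $\Lc_p$-estimate, I invoke the Lipschitz bound $\|f_t'\|\le C_f$ of Assumption~\ref{a:ft}(iii):
\[
\db(f_t(\tilde{x}_{1:t-1}(\omega),x_t(\omega),\omega),Y_t(\omega))\le \db(f_t(x_{1:t-1}(\omega),x_t(\omega),\omega),Y_t(\omega))+C_f\sum_{s<t}\|\tilde{x}_s(\omega)-x_s(\omega)\|.
\]
Substituting this into the previous inequality and taking $\Lc_p$-norms (via Minkowski) yields a scalar recursion $a_t\le\alpha r_t+\beta\sum_{s<t}a_s$, where $a_t=\|\tilde{x}_t-x_t\|_{\Xc_t}$ and $r_t=\|\db(f_t(x_{1:t}(\cdot),\cdot),Y_t(\cdot))\|_{\Yc_t}+\|\db(x_t(\cdot),X_t(\cdot))\|_{\Xc_t}$. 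Iterating $T$ times gives $\|\tilde{x}-x\|_\Xc\le C'\sum_{t=1}^T r_t$, and the right-hand side agrees, up to a constant, with $\dist\bigl(0,\mathfrak{H}(x)\bigr)$ for the natural multifunction $\mathfrak{H}(x)=\bigl(F(x)-Y\bigr)\times(x-X)$ encoding the constraints \eqref{spn-2}--\eqref{spn-3}. This is precisely the subregularity bound of Definition~\ref{d:subregularity-gen}.

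The main obstacle will be the measurable-selection step. Definition~\ref{d:sub-recourse} provides only pointwise existence of $\xi$ satisfying the error bound, whereas the induction requires $\tilde{x}_t$ to be an $\Fc_t$-measurable element of $\Xc_t$ so that the next stage remains well posed and the causal operator $f_t$ is evaluated on an admissible history. The key technical task is to package the inclusions \eqref{sub-recourse}--\eqref{sub-recourse2} together with the (possibly $\varepsilon$-relaxed) error inequality as an $\Fc_t$-measurable closed-valued multifunction of $\omega$, justifying its measurability from the Carath\'eodory property of $f_t$, the $\Fc_t$-measurability of $X_t$ and $Y_t$, and the $\Fc_{t-1}$-measurability of the previously constructed history $\tilde{x}_{1:t-1}$. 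Once this selection is in hand, the remainder reduces to routine $\Lc_p$-triangle-inequality manipulations of the scalar recursion above.
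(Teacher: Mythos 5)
Your proposal is correct and follows essentially the same route as the paper: a stage-by-stage construction in which Definition~\ref{d:sub-recourse} is applied pointwise with $\zeta_{1:t-1}$ equal to the already-corrected history, the admissible set of $\xi$'s is packaged as an $\Fc_t$-measurable closed-valued multifunction admitting a Kuratowski--Ryll-Nardzewski selection, and the Lipschitz propagation of the earlier corrections is absorbed into a scalar recursion that is unrolled over $t=1,\dots,T$. Minor remarks: the $\varepsilon$-relaxation of the constant is unnecessary since Definition~\ref{d:sub-recourse} already guarantees nonemptiness with the constant $C$ itself, and your observations that the estimate is in fact global and that the pointwise bound yields $\tilde{x}_t\in\Xc_t$ are both sound.
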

\begin{proof}
Let $u=(u_1,\dots,u_T)\in {\Xc}$ be chosen from a sufficiently small neighborhood of~$\hat{x}$. We shall construct a solution
$\bar{x}$ of \eqref{spn-2}--\eqref{spn-3} which is close to $u$, with an appropriate error bound.

For $t=1,\dots,T$ we consider the system in the space ${\Xc}_t$:
\begin{gather*}
F_t(\bar{x}_{1:t-1},x_{t})  \in Y_t,\\
x_t \in X_t.
\end{gather*}
Our intention is to find a solution $\bar{x}_t$ to this system, which is sufficiently close to $u_{t}$.
By Lipschitz continuity of $F_t(\cdot,\cdot)$,
\begin{equation}
\label{tt1-ns}
\big\| F_t\big(\bar{x}_{1:t-1},u_{t}\big) \big\|
\le \big\| F_t\big(u_{1:t}\big)\big\|  + L  \big\| \bar{x}_{1:t-1} - u_{1:t-1}\big\|.
\end{equation}
We define a multifunction $\mathfrak{G}:\Omega\rightrightarrows \Rb^n$  by the relations
\begin{multline*}
\mathfrak{G}(\omega) =\Big\{ \xi\in \Rb^n : \,
f_{t}(\bar{x}_{1:t-1}(\omega),\xi ,\omega) \in Y_t(\omega), \
\xi\in X_t(\omega),\\
\quad  \big\| \xi - u_{t}(\omega) \big\| \le
C  \Big( \db\big( f_t(\bar{x}_{1:t-1}(\omega),u_{t}(\omega),\omega\big) ,Y_t(\omega)\big)
 +  \db\big(u_{t}(\omega), X_t(\omega)\big)\Big)
\Big\}.
\end{multline*}
We observe that both distance functions in the definition of $\mathfrak{G}(\cdot)$ are $\Fc_t$-measurable
by \cite[Corollary 8.2.13]{aubin2009set}. Therefore, the multifunction $\mathfrak{G}$ is $\Fc_t$-measurable.
It has non-empty images due to Definition \ref{d:sub-recourse} applied with with $\eta = u_{t}(\omega)$ and
$\zeta_{1:t-1}= \bar{x}_{1:t-1}(\omega)$. Hence, a measurable selection $\bar{x}_t$ of $\mathfrak{G}$ exists (cf. \cite{kuratowski1965general}). From the construction of the multifunction
$\mathfrak{G}$,
\[
 \big\| \bar{x}_t(\omega) - u_{t}(\omega) \big\|
 \le C  \Big( \db\big( f_t(\bar{x}_{1:t-1}(\omega),u_{t}(\omega),\omega\big) ,Y_t(\omega)\big)
 +  \db\big(u_{t}(\omega), X_t(\omega)\big)\Big).
\]
Therefore, with the norms and distances in the spaces $\Xc_t$ and $\Yc_t$,
\begin{equation}
 \big\| \bar{x}_t - u_{t} \big\|
 \le C  \Big( \dist \big( F_t(\bar{x}_{1:t-1},u_{t}\big) , Y_t \big)
 +  \dist\big(u_{t}, X_t\big)\Big).
\label{tt1-2-n1s}
\end{equation}
 Combining  inequalities \eqref{tt1-2-n1s} and \eqref{tt1-ns}, we infer that
\begin{equation}
\label{ut-ns}
 \big\|  \bar{x}_t - u_{t} \big\|
\le\  C  \Big( \dist \big( F_t(u_{1:t}\big) , Y_t \big)  + L \big( \big\| \bar{x}_{1:t-1} - u_{1:t-1}\big\|\big)
 \ + \dist\big(u_{t}, X_t\big)\Big).
\end{equation}
We can now prove by induction that constants $\bar{C}_t$ exist such that
\[
\|  \bar{x}_t - u_t \| \le \bar{C}_t \sum_{\ell=1}^t\Big(
  \dist\big( F_\ell\big(u_{1:\ell}\big),Y_\ell\big)  +  \dist\big(u_{\ell}, X_\ell\big)   \Big).
\]
For $t=1$, the result is provided by \eqref{ut-ns}, because the term $\big\| \bar{x}_{1:t-1} - u_{1:t-1}\big\|$ is not present there.
Supposing it is true for $t-1$, we verify it for~$t$ using \eqref{ut-ns}.
The last relation for $t=T$  establishes the subregularity of the system \eqref{spn-2}--\eqref{spn-3}.

 \end{proof}

%

Under Assumption \ref{a:ft}, we denote:
\[
F_t'(\hat{x}_{1:t}) = A_t = \big( A_{t,1}, \dots,  A_{t,t}\big), \quad t=1,\dots,T,
\]
with partial Jacobians $A_{t,\ell}:\Xc_\ell\to \Yc_t$,
\begin{equation}
\label{e:A_tell}
A_{t,\ell} = \frac{\partial F_t(\hat{x}_{1:t})}{\partial x_\ell}, \quad \ell = 1,\dots,t,\quad t=1,\dots,T.
\end{equation}
These linear operators are defined pointwise:
\begin{equation}
\label{e:A_tofomega}
A_{t,\ell}(\omega) = \frac{\partial f_t(\hat{x}_{1:t}(\omega),\omega)}{\partial x_\ell(\omega)}, \quad \ell = 1,\dots,t,\quad t=1,\dots,T,\quad \omega\in \varOmega.
\end{equation}
Due to Assumption 1, all operators $A_{t,\ell}$ are continuous linear operators.

Now, we establish necessary conditions of optimality for problem \eqref{spn-1}--\eqref{spn-3}.

\begin{theorem}
\label{t:sp-convex-simple}
Suppose the system \eqref{sub-recourse}--\eqref{sub-recourse2} admits complete subregular recourse and the policy $\hat{x}$ is a local minimum
of problem \eqref{spn-1}--\eqref{spn-3}. Then
a subgradient $\hat{g}\in \partial \varphi(\hat{x})$,
multipliers $\hat{\psi}_t\in N_{Y_t}(F_t(\hat{x}_{1:t}))$, $t=1,\dots,T$,
and normal elements $\hat{n}_t\in N_{X_t}(\hat{x}_t)$, $t=1,\dots,T$, exist,
such that for $P$-almost all $\omega\in \varOmega$ we have:
\begin{equation}
\label{Fermat-msp}
 \hat{g}_t + A_{t,t}^{\top}\hat{\psi}_{t} +  \Eb_t\bigg[ \sum_{\ell=t+1}^T A_{\ell,t}^{\top}\hat{\psi}_{\ell}\bigg] + \hat{n}_t = 0,
\quad t=1,\dots,T.
\end{equation}
\end{theorem}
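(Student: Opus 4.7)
The plan is to establish \eqref{Fermat-msp} via exact penalization followed by Fermat's rule. First, I would invoke the subregularity granted by Theorem \ref{t:simple-subregularity}: there exist $\delta,C>0$ such that for all $x$ in the $\delta$-neighborhood of $\hat x$,
\[
\dist(x,S) \;\le\; C \sum_{t=1}^T \Bigl(\dist\bigl(F_t(x_{1:t}),Y_t\bigr) + \dist(x_t,X_t)\Bigr),
\]
where $S$ denotes the feasible set of \eqref{spn-2}--\eqref{spn-3}. Letting $L$ be a local Lipschitz constant of $\varphi$ around $\hat x$, the classical exact penalty principle (e.g., Clarke, \emph{Optimization and Nonsmooth Analysis}, Prop. 2.4.3) implies that for any $M \ge L C$, the point $\hat x$ is a local unconstrained minimizer of
\[
\varPsi(x) = \varphi(x) + M \sum_{t=1}^T \dist\bigl(F_t(x_{1:t}),Y_t\bigr) + M \sum_{t=1}^T \dist(x_t,X_t).
\]

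Next, Fermat's rule yields $0 \in \partial \varPsi(\hat x)$ in the Clarke sense. Since every summand is locally Lipschitz, the Clarke sum rule produces
\[
0 \in \partial \varphi(\hat x) + M \sum_{t=1}^T \partial \bigl[x \mapsto \dist(F_t(x_{1:t}),Y_t)\bigr](\hat x) + M \sum_{t=1}^T \partial \dist(\Cdot,X_t)(\hat x_t).
\]
Theorem \ref{t:Clarke-incl} identifies the $t$-th term in the first sum (viewed in $\Xc^*$ with zero components beyond position $t$) with $[F_t'(\hat x_{1:t})]^*\bigl(N_{Y_t}(F_t(\hat x_{1:t})) \cap \mathbb{B}_{\Yc_t^*}\bigr)$. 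For the decomposable closed convex $X_t \subset \Xc_t$, the classical identity $\partial \dist(\Cdot,X_t)(\hat x_t) = N_{X_t}(\hat x_t) \cap \mathbb{B}_{\Xc_t^*}$ applies. Absorbing the penalty parameter $M$ into the multipliers yields $\hat g \in \partial \varphi(\hat x)$, $\hat \psi_t \in N_{Y_t}(F_t(\hat x_{1:t}))$, and $\hat n_t \in N_{X_t}(\hat x_t)$ (the cone property preserves membership under positive scaling) satisfying, in $\Xc^*$,
\[
0 \;=\; \hat g + \sum_{t=1}^T [F_t'(\hat x_{1:t})]^* \hat \psi_t + (\hat n_1,\dots,\hat n_T).
\]

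It remains to project this equation onto the component $\Xc_t^*$. By \eqref{e:A_tell}, $[F_t'(\hat x_{1:t})]^*$ has block adjoints $A_{t,\ell}^*$ for $\ell=1,\dots,t$, so the $\Xc_t^*$-component collects $A_{\ell,t}^*\hat \psi_\ell$ for $\ell \ge t$. The key observation is that the adjoint $A_{\ell,t}^* : \Yc_\ell^* \to \Xc_t^*$ implicitly projects onto $\Fc_t$-measurable functions: for $h \in \Xc_t$ and $\psi \in \Yc_\ell^*$,
\[
\langle \psi, A_{\ell,t} h \rangle_{\Yc_\ell} = \int \psi(\omega)^\top A_{\ell,t}(\omega) h(\omega)\, P(d\omega) = \int \Eb_t\bigl[A_{\ell,t}^\top \psi\bigr](\omega)^\top h(\omega)\, P(d\omega),
\]
because $h$ is $\Fc_t$-measurable. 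Hence $A_{\ell,t}^*\hat \psi_\ell = \Eb_t[A_{\ell,t}^\top \hat \psi_\ell]$ for $\ell>t$, while $A_{t,t}^*\hat \psi_t = A_{t,t}^\top \hat \psi_t$ because both factors are already $\Fc_t$-measurable. Substituting these expressions produces \eqref{Fermat-msp}. The principal obstacle is this last step: correctly recognizing that the multistage adjoint between $\sigma$-algebra-indexed spaces carries a built-in conditional expectation, which is precisely the source of the $\Eb_t[\cdot]$ operator in the stated optimality condition.
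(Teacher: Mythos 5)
Your proposal is correct and follows essentially the same route as the paper: exact penalization via Clarke's principle combined with the subregularity of Theorem \ref{t:simple-subregularity}, the Clarke sum rule, the subdifferential formula of Theorem \ref{t:Clarke-incl}, and the identification $A_{\ell,t}^*\hat\psi_\ell = \Eb_t[A_{\ell,t}^\top\hat\psi_\ell]$ arising from the adjoint between $\Fc_t$- and $\Fc_\ell$-measurable spaces, which is exactly the paper's computation in \eqref{shuffle}. The only cosmetic difference is that you penalize the stage-wise residuals separately and apply Theorem \ref{t:Clarke-incl} componentwise, whereas the paper works with the aggregated operator $F$ and set $Y$; the two are equivalent.
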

\begin{proof}
Since $\varphi(\cdot)$ is Lipschitz continuous about $\hat{x}$ with some constant $L_\varphi$,
then for every $K> L_\varphi$ the point
$\hat{x}$ is a local minimum of the function
\[
\varphi(x) + K  \dist(x, X \cap F^{-1}(Y));
\]
see \cite[Prop. 2.4.3]{clarke1990optimization}.
The system \eqref{spn-2}--\eqref{spn-3} is subregular with some constant $\bar{C}$ by virtue of Theorem \ref{t:simple-subregularity}. Consequently,
$\hat{x}$ is a local minimum of the function
\[
\varphi(x) + K \bar{C} \big(  \dist(F(x), Y) + \dist(x, X) \big).
\]
This type of argument is discussed in detail in \cite{ioffe1979necessary,burke1991calmness,klatte2002constrained}.
We use Clarke's  necessary conditions of optimality for Lipschitz continuous functions:
\[
 0 \in \partial \varphi(\hat{x}) + K \bar{C} \,\partial  \big[\dist(F(\cdot), Y)\big](\hat{x})
 + K \bar{C}\, \partial \big[\dist(\cdot, X)\big](\hat{x}).
\]
The Clarke-subdifferential of the function $\dist(F(\cdot), Y)$ is calculated in Theorem \ref{t:Clarke-incl}:
\[
\partial \varPhi(\hat{x}) = \big[F'(\hat{x})\big]^* \, \big(N_{Y}(F(\hat{x})) \cap \mathbb{B}_{\Yc^*}\big)
\]
The subdifferential of $\dist(\hat{x}, X)$ is $N_X(\hat{x})\cap \mathbb{B}_{\Xc^*} $. We infer that a subgradient
${\hat{g}\in \partial \varphi(\hat{x})}$, an element $\hat{\psi}\in N_Y(F(\hat{x}))$, and a normal vector $\hat{n}\in N_X(\hat{x})$
exist, such that
\[
\hat{g} + \big[F'(\hat{x})\big]^* \hat{\psi} + \hat{n} = 0.
\]
We can derive a more explicit form of the vector $\big[F'(\hat{x})\big]^* \hat{\psi}$. Due to the decomposability
of $X_t$, we can apply Lemmas \ref{l:decomposable-tangent} and \ref{l:polar-decomposability} to obtain that the normal cone $N_{X_t}(x)$ is composed of elements which are selectors of $N_{X_t(\cdot)}\big(x(\cdot)\big)$; we have
$N_{X_t}(\hat{x}_{t})(\omega) = N_{X_t(\omega)}\big(\hat{x}_{t}(\omega)\big)$ a.s..
Using the same argument and the causality of $F_t$, we obtain
\[
\hat{\psi}_t(\omega) \in N_{Y_t(\omega)}\big(f_t(\hat{x}_{1:t}(\omega),\omega)\big)\quad t= 1,\dots,T,\quad \text{for almost all }\omega\in \varOmega.
\]
Now, using the block-triangular form of $A =  F'(\hat{x})$, for any $h\in \Xc$ we can write
\begin{equation}
\label{shuffle}
\langle A^* \hat{\psi}, h \rangle = \langle \hat{\psi}, A h \rangle = \sum_{t=1}^T \langle \hat{\psi}_t, A_t  h \rangle =
\sum_{t=1}^T \sum_{\ell=1}^t \langle \hat{\psi}_t, A_{t,\ell}  h_\ell \rangle =
\sum_{\ell=1}^T \sum_{t=\ell}^T \langle A^*_{t,\ell}\hat{\psi}_t,   h_\ell \rangle.
\end{equation}
It follows that $A^*_{t,\ell} \hat{\psi_t} = \Eb\big[  A_{t,\ell}^{\top}\hat{\psi_t}\,\big|\,\Fc_\ell\big]$.
This yields the equations \eqref{Fermat-msp}.
 \end{proof}

\subsection{Nonanticipativity Constraints}
\label{s:nonanticipativity}

A different situation arises with the use of \emph{nonanticipativity constraints}. The fundamental idea reflected in this formulation,  due to
\cite{wets1975relation},
is to consider extended spaces $\widetilde{\Xc}_t = \Lc_p(\varOmega,\Fc,P;\Rb^n)$, $t=1,\dots,T$ and a relaxed policy
\[
{x} = ({x}_1,\dots,{x}_T) \in \widetilde{\Xc}_1 \times \cdots \times \widetilde{\Xc}_T = \widetilde{\Xc}.
\]
In order to enforce that the relaxed policy can  be identified with an element of the space $\Xc$, we impose
the following requirement known as \emph{nonaticipativity constraint}:
\begin{equation}
\label{nonanticipativity-constraint}
{x}_t = \Eb[x_t\,|\, \Fc_t],\quad t=1,\dots,T.
\end{equation}
The equations \eqref{nonanticipativity-constraint} define a closed subspace $\Nc$ in $\widetilde{\Xc}$. This subspace can be identified
with the space $\Xc$ in the original problem.

In what follows, we use the notation $\Eb_t [x_t]$ for  $\Eb[x_t|\Fc_t]$.

In order to formally define the nonlinear problem in the space $\widetilde{\Xc}$ we need to
extend the domains of the functional $\varphi(\cdot)$ and the domain and range of the operator $F(\cdot)$.
We denote by $\widetilde{\varphi}:\widetilde{\Xc}\to \Rb$  a Lipschitz continuous extension of $\varphi$, that is,
$\widetilde{\varphi}(x)= \varphi(x)$
for all $x\in \Nc$ (here we identify $\Nc$ with $\Xc$). Such an extension may be defined in various ways, for example, as
\[
\widetilde{\varphi}(x_1,x_2,\dots,x_T) = \varphi\big( \Eb_1[x_1],\Eb_2[x_2], \dots, \Eb_T[x_T]).
\]
An extension of a causal operator $F(\cdot)$ is natural from its definition; it is still given by~\eqref{causal}.
Its value space is $\widetilde{\Yc} = \widetilde{\Yc}_1 \times \dots \times \widetilde{\Yc}_T$ with
$\widetilde{\Yc}_t = \Lc_p(\varOmega,\Fc,P;\Rb^m)$, $t=1,\dots,T$. The decomposable sets $X_t$ and $Y_t$ can still be viewed
as subsets  $\widetilde{X}_t$ of $\widetilde{\Xc}_t$ and  $\widetilde{Y}_t$ of $\widetilde{\Yc}_t$:
\begin{align*}
\widetilde{X}_t&=\{x_t \in \widetilde{\Xc}_t: x_t(\omega) \in X_t(\omega) \text{ a.s.}\}, \\
\widetilde{Y}_t&=\{y_t \in \widetilde{\Yc}_t: y_t(\omega) \in Y_t(\omega) \text{ a.s.}\}, \quad t=1,\dots,T.
\end{align*}
Notice that the sets $\widetilde{X}_t$  and $\widetilde{Y}_t$ contain more elements than their counterparts in the previous formulation because they allow for a broader class of measurable selections from $X_t(\cdot)$ and $Y_t(\cdot)$, respectively.

The problem is re-formulated as follows:
\begin{align}
\min & \ \widetilde{\varphi}(x_1,\dots,x_T) \label{spn-1n}\\
\text{s.t.}\
& x_t - \Eb_tx_t = 0 \quad \text{a.s.},\quad t=1,\dots,T,\label{spn-2n}\\
& F_t( x_{1:t})  \in \widetilde{Y}_t \quad \text{a.s.},\quad t=1,\dots,T, \label{spn-3n}\\
& x_t \in \widetilde{X}_t \quad \text{a.s.},\quad t=1,\dots,T. \label{spn-4n}
\end{align}

Simplified versions of this problem are considered in \cite{rockafellar1976nonanticipativity,flaam1985nonanticipativity},
under the assumption that
${\varphi}(x_1,\dots,x_T) = \Eb\big[ \sum_{t=1}^T c_t(x_t(\omega),\omega)\big]$, with $c_t(\cdot,\cdot)$ being convex normal integrands. The authors use the space $\Lc_\infty(\varOmega,\Fc,P;\Rb^n)$ to allow for the interior point conditions for the sets $\widetilde{X}_t$, but the price for this setting was that the dual elements live in the spaces of bounded finitely additive measures and can contain singular components.  Specific properties of subdifferentials of expected value functionals in $\Lc_\infty$ spaces (see, \cite{rockafellar1971integrals} and \cite[Ch. VII]{castaing2006convex}) allow
for the restriction of the dual elements to $\Lc_1(\varOmega,\Fc_t,P;\Rb^n)$.

Our approach is different. We work in the space $\Lc_p(\varOmega,\Fc,P;\Rb^n)$, with $p\in [1,\infty)$.
We consider general Lipschitz continuous functionals $\varphi(\cdot)$, and a nonlinear causal operator  $F(\cdot)$.
Our idea is to require the existence of subregular recourse and to exploit its properties, as well as specific properties of causal operators to derive the optimality conditions.  In this way, we relate
assumptions on finite-dimensional systems associated with elementary events $\omega\in \varOmega$ and stages $1,\dots,T$ with the optimality conditions for the entire system.

First, we prove subregularity of the constraints present in the problem formulation with explicit nonaticipativity constraints.

\begin{theorem}
\label{t:mustistage-subregularity-n}
If the system \eqref{sub-recourse}--\eqref{sub-recourse2} admits
 complete subregular recourse,
then the system \eqref{spn-2n}--\eqref{spn-4n} is subregular at any feasible point
$\hat{x}= (\hat{x}_1,\dots,\hat{x}_T)$.
\end{theorem}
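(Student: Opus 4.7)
My plan is to adapt the inductive construction used in the proof of Theorem~\ref{t:simple-subregularity}, inserting a conditional-expectation step so that the surrogate variable fed into the subregular-recourse hypothesis is $\Fc_t$-measurable, even though the input $u_t$ is not. Given $u = (u_1,\dots,u_T) \in \widetilde{\Xc}$ in a small neighborhood of $\hat{x}$, define $\tilde{u}_t = \Eb_t[u_t]$ for each $t$. The quantity $\|u_t - \tilde{u}_t\|$ is precisely the residual of the nonanticipativity constraint \eqref{spn-2n} at stage $t$ and will appear naturally on the right-hand side of the subregularity estimate.

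I would then construct $\bar{x}_t$ stage by stage. At stage $t$, having built an $\Fc_{t-1}$-adapted $\bar{x}_{1:t-1}$ that is feasible for \eqref{spn-2n}--\eqref{spn-4n} through stage $t-1$, introduce the multifunction
\begin{multline*}
\mathfrak{G}(\omega) = \Big\{\xi \in \Rb^n :\, f_t(\bar{x}_{1:t-1}(\omega),\xi,\omega) \in Y_t(\omega),\ \xi \in X_t(\omega),\\
\|\xi - \tilde{u}_t(\omega)\| \le C\big(\db\big(f_t(\bar{x}_{1:t-1}(\omega),\tilde{u}_t(\omega),\omega),Y_t(\omega)\big) + \db(\tilde{u}_t(\omega),X_t(\omega))\big)\Big\}.
\end{multline*}
Because $\bar{x}_{1:t-1}$ and $\tilde{u}_t$ are both $\Fc_t$-measurable and $X_t(\cdot),Y_t(\cdot)$ are $\Fc_t$-measurable with closed convex values, $\mathfrak{G}$ is $\Fc_t$-measurable. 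The complete subregular recourse hypothesis, applied pointwise with $\zeta_{1:t-1} = \bar{x}_{1:t-1}(\omega)$ and $\eta = \tilde{u}_t(\omega)$, ensures $\mathfrak{G}(\omega)\neq\emptyset$ a.s., so the Kuratowski--Ryll-Nardzewski theorem yields a measurable selection $\bar{x}_t$. By construction $\bar{x}_t$ is $\Fc_t$-measurable, $\bar{x}_t\in\widetilde{X}_t$, and $F_t(\bar{x}_{1:t})\in\widetilde{Y}_t$.

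For the error bound, integrating the pointwise estimate defining $\mathfrak{G}$ gives $\|\bar{x}_t - \tilde{u}_t\|\le C(\dist(F_t(\bar{x}_{1:t-1},\tilde{u}_t),\widetilde{Y}_t) + \dist(\tilde{u}_t,\widetilde{X}_t))$. Applying Lipschitz continuity of $F_t$ and triangle inequalities,
\begin{align*}
\dist(F_t(\bar{x}_{1:t-1},\tilde{u}_t),\widetilde{Y}_t) &\le \dist(F_t(u_{1:t}),\widetilde{Y}_t) + L\|\bar{x}_{1:t-1} - u_{1:t-1}\| + L\|\tilde{u}_t - u_t\|,\\
\dist(\tilde{u}_t,\widetilde{X}_t) &\le \dist(u_t,\widetilde{X}_t) + \|\tilde{u}_t - u_t\|,
\end{align*}
and combining with $\|\bar{x}_t - u_t\|\le \|\bar{x}_t - \tilde{u}_t\| + \|\tilde{u}_t - u_t\|$ produces
\[
\|\bar{x}_t - u_t\| \le \widetilde{C}\Big(\|u_t - \Eb_t u_t\| + \|\bar{x}_{1:t-1} - u_{1:t-1}\| + \dist(F_t(u_{1:t}),\widetilde{Y}_t) + \dist(u_t,\widetilde{X}_t)\Big).
\]
A straightforward induction on $t$, identical in spirit to the closing step of the proof of Theorem~\ref{t:simple-subregularity}, then yields constants $\bar{C}_t$ such that
\[
\|\bar{x}_t - u_t\| \le \bar{C}_t \sum_{\ell=1}^t \Big(\|u_\ell - \Eb_\ell u_\ell\| + \dist(F_\ell(u_{1:\ell}),\widetilde{Y}_\ell) + \dist(u_\ell,\widetilde{X}_\ell)\Big),
\]
and the case $t = T$ is precisely the subregularity estimate for \eqref{spn-2n}--\eqref{spn-4n}.

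The main obstacle is that a naive repetition of the previous proof with $u_t$ in place of $\tilde{u}_t$ would fail: the multifunction built from $u_t$ is only $\Fc$-measurable, so its measurable selection need not be nonanticipative. Replacing $u_t$ by $\tilde{u}_t = \Eb_t[u_t]$ restores $\Fc_t$-measurability, and the exact price paid is the appearance of the nonanticipativity residual $\|u_\ell - \Eb_\ell u_\ell\|$ on the right-hand side, which is precisely the correct defect term for the augmented constraint system.
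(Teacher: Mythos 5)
Your proposal is correct and follows essentially the same route as the paper's proof: project $u_t$ onto $\Fc_t$-measurable functions via $\Eb_t[u_t]$, apply the complete subregular recourse pointwise to build an $\Fc_t$-measurable selection, and absorb the nonanticipativity residual $\|u_t-\Eb_t[u_t]\|$ into the defect term before inducting on $t$. The only cosmetic difference is that you work with $\widetilde{X}_t,\widetilde{Y}_t$ throughout, whereas the paper first states the integrated bound with $X_t,Y_t$ and then invokes Jensen's inequality to identify $\dist(\Eb_t[u_t],X_t)$ with $\dist(\Eb_t[u_t],\widetilde{X}_t)$.
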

\begin{proof}
Let $u=(u_1,\dots,u_T)\in \widetilde{\Xc}$ be fixed. We shall construct a solution
$\bar{x}$ of \eqref{spn-2n}--\eqref{spn-4n} which is close to $u$, with an appropriate error bound.

For $t=1,\dots,T$, we consider the following system in the space $\widetilde{\Xc}_t$:
\begin{gather*}
F_t(\bar{x}_{1:t-1},x_{t})  \in \widetilde{Y}_t,\\
x_t - \Eb_t [x_t] = 0,\\
x_t \in \widetilde{X}_t.
\end{gather*}
Our intention is to find a solution $\bar{x}_t$ to this system, which is sufficiently close to $\Eb_t [u_{t}]$.
Using the Lipschitz continuity of $F_t(\cdot)$, we obtain
\begin{equation}
\label{tt1-n}
\big\| F_t\big(\bar{x}_{1:t-1},\Eb_t [u_{t}]\big) \big\|
\le \big\| F_t\big(u_{1:t}\big)\big\|  + L \big( \big\| \bar{x}_{1:t-1} - u_{1:t-1}\big\|+ \big\| u_t - \Eb_t [u_{t}] \big\|\big).
\end{equation}
We define a multifunction $\mathfrak{G}:\Omega\rightrightarrows \Rb^n$  by the relations
\begin{align*}
\mathfrak{G}(\omega) =&\Big\{ \xi:\,
f_{t}(\bar{x}_{1:t-1}(\omega),\xi ,\omega) \in Y_t(\omega), \quad
\xi\in X_t(\omega),\\
&\quad \big\| \xi - \Eb_t [u_{t}](\omega) \big\| 
 \le C  \Big( \db\big( f_t\big(\bar{x}_{1:t-1}(\omega),\Eb_t [u_{t}](\omega),\omega\big),Y_t(\omega\big) \big)
 +  \db\big(\Eb_t [u_{t}](\omega), X_t(\omega)\big)\Big)
\Big\}.
\end{align*}
We observe that both distance terms on the right hand side are $\Fc_t$-measurable by \cite[Corollary 8.2.13]{aubin2009set}. Therefore, the multifunction $\mathfrak{G}$ is $\Fc_t$-measurable.
It has non-empty images due to Definition \ref{d:sub-recourse} applied with $\eta = \Eb_t [u_{t}](\omega)$ and
$\zeta_{1:t-1}= \bar{x}_{1:t-1}(\omega)$. Hence, an $\Fc_t$-measurable selection $\bar{x}_t$ of $\mathfrak{G}$ exists (\emph{cf.} \cite{kuratowski1965general}). From the construction of $\mathfrak{G}$,
\begin{equation}
\big\| \bar{x}_t(\omega) - \Eb_t [u_{t}](\omega) \big\| 
\le C  \Big( \db\big( f_t\big(\bar{x}_{1:t-1}(\omega),\Eb_t [u_{t}](\omega),\omega\big),Y_t(\omega\big) \big)
 +  \db\big(\Eb_t [u_{t}](\omega), X_t(\omega)\big)\Big).
\label{tt1-2-n}
\end{equation}
We view both sides of this inequality as nonnegative elements of the space $\Lc_p(\varOmega,\Fc_t,P)$.
Since it is a Banach lattice, the functional norm of the element on the left hand side does not exceed the functional norm of
the element on right hand side. The triangle inequality yields:
\begin{equation}
 \big\|  \bar{x}_t - \Eb_t [u_{t}] \big\|
 \le C  \Big( \dist\big( F_t(\bar{x}_{1:t-1},\Eb_t [u_{t}]\big),Y_t \big)
 +  \dist\big(\Eb_t [u_{t}], X_t\big)\Big).
\label{tt1-2-n1}
\end{equation}
For every $\tilde{x}_t\in \widetilde{\Xc}_t$, Jensen inequality implies that
\[
\big\| \Eb_t [u_{t}] - \Eb_t[\tilde{x}_t] \big\| \le \big\| \Eb_t [u_{t}] - \tilde{x}_t \big\|
\]
and $\Eb_t[\tilde{x}_t] \in X_t$ by convexity. Therefore, $\dist\big(\Eb_t [u_{t}], X_t\big)= \dist\big(\Eb_t [u_{t}], \widetilde{X}_t\big)$. Using a similar argument, we have
$\dist\big( F_t(\bar{x}_{1:t-1},\Eb_t [u_{t}]\big),Y_t \big)
=\dist\big( F_t(\bar{x}_{1:t-1},\Eb_t [u_{t}]\big),\widetilde{Y}_t \big)$.

We observe that both distances above are finite because
$\big\| f_t(\bar{x}_{1:t-1}(\cdot),\Eb_t [u_{t}](\cdot),\cdot\big) \big\|$ has a finite $\Lc_p$-norm by virtue of \eqref{tt1-n} and the term
$\db\big(\Eb_t [u_{t}](\cdot), X^0_t(\cdot)\big)$ is bounded from above by $\|\Eb_t [u_{t}](\cdot) -\hat{x}_t(\cdot)\|$, which  has a finite $\Lc_{p}$-norm by assumption.

 Combining these observations with inequalities \eqref{tt1-2-n1} and \eqref{tt1-n}, we infer that
\[
\big\|  \bar{x}_t - \Eb_t [u_{t}] \big\|  \le\  C  \Big( \dist\big( F_t\big(u_{1:t}\big),\widetilde{Y}_t\big)
 + L \big( \big\| \bar{x}_{1:t-1} - u_{1:t-1}\big\| 
 + \big\| u_t - \Eb_t [u_{t}] \big\|\big)
 \ + \dist\big(\Eb_t [u_{t}], \widetilde{X}_t\big)\Big).
\]
Since
$\dist\big(\Eb_t [u_{t}], \widetilde{X}_t\big) \le  \dist\big(u_{t}, \widetilde{X}_t\big)+ \big\|  u_t - \Eb_t [u_{t}] \big\| $, we conclude that
\begin{equation}
\label{ut-n}
\|  \bar{x}_t - u_{t} \| \le  (1 + C + C L) \big\|  u_t - \Eb_t [u_{t}] \big\| 
 + C  \Big(  \dist\big( F_t\big(u_{1:t}\big),\widetilde{Y}_t\big)  + L  \big\| \bar{x}_{1:t-1} - u_{1:t-1}\big\|
 \ + \dist\big(u_{t}, \widetilde{X}_t\big)\Big).
\end{equation}

We can now prove by induction that constants $\bar{C}_t$ exist such that
\[
\|  \bar{x}_t - u_t \| \le \bar{C}_t \sum_{\ell=1}^t\Big( \big\|  u_\ell - \Eb_\ell [u_{\ell}] \big\|
+   \dist\big( F_\ell(u_{1:\ell}),\widetilde{Y}_\ell\big)  +  \dist\big(u_{\ell}, \widetilde{X}_\ell\big)   \Big).
\]
For $t=1$, the result follows from \eqref{ut-n}, because the term
$\big\| \bar{x}_{1:t-1} - u_{1:t-1}\big\|$ is not present.
Supposing it is true for $t-1$, we verify it for $t$ using \eqref{ut-n}.
The last relation for $t=T$  establishes the subregularity of the system \eqref{spn-2n}--\eqref{spn-4n}.
 \end{proof}

Abusing notation, we shall use the same notation for the operators
\[
F_t'(\hat{x}_{1:t}) = A_t = \big( A_{t,1}, \dots,  A_{t,t}\big), \quad t=1,\dots,T,
\]
referring to the partial Jacobians $A_{t,\ell}:\widetilde{\Xc}_\ell\to \widetilde{\Yc}_t $, which are defined by \eqref{e:A_tell}-\eqref{e:A_tofomega}, but are acting as linear operators between larger spaces.

Now, we can formulate the main result of this section.

\begin{theorem}
\label{t:sp-convex-simple-2}
Suppose the system \eqref{sub-recourse}--\eqref{sub-recourse2} admits complete subregular recourse. If a policy $\hat{x}$ is a local minimum
of problem \eqref{spn-1n}--\eqref{spn-4n} then
a subgradient $\tilde{g}\in \partial \widetilde{\varphi}(\hat{x})$,
multipliers ${\lambda}_t\in \widetilde{\Xc}_t^*$, $\widetilde{\psi}_t\in {N}_{\widetilde{Y}_t}(F_t(\hat{x}_{1:t}))$, $t=1,\dots,T$,
and normal elements $\tilde{n}_t\in {N}_{\widetilde{X}_t}(\hat{x}_t)$, $t=1,\dots,T$, exist,
such that for $P$-almost all $\omega\in \varOmega$ we have:
\begin{gather}
\label{Fermat-msp-2}
 \tilde{g}_t + {\lambda}_t
 +   \sum_{\ell=t}^T A_{\ell,t}^{\top}\widetilde{\psi}_{\ell} + \tilde{n}_t = 0,
\quad t=1,\dots,T,\\
\Eb_t[\lambda_t] = 0, \quad t=1,\dots,T. \label{lambda-normal}
\end{gather}{}
\end{theorem}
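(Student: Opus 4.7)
The strategy parallels the proof of Theorem~\ref{t:sp-convex-simple}, with the key new feature being the treatment of the nonanticipativity subspace. First I would invoke Theorem~\ref{t:mustistage-subregularity-n} to conclude that the system \eqref{spn-2n}--\eqref{spn-4n} is subregular at $\hat{x}$ with some constant $\bar{C}$. Since $\widetilde{\varphi}$ is Lipschitz near $\hat{x}$ with some constant $L_{\widetilde{\varphi}}$, Clarke's exact penalization principle \cite[Prop.~2.4.3]{clarke1990optimization}, combined with subregularity, implies that for $K > L_{\widetilde{\varphi}}$ the point $\hat{x}$ is a local minimum (without constraints) of the function
\[
\widetilde{\varphi}(x) + K\bar{C}\sum_{t=1}^T \bigl\| x_t - \Eb_t[x_t]\bigr\| + K\bar{C}\,\dist\bigl(F(x),\widetilde{Y}\bigr) + K\bar{C}\,\dist(x,\widetilde{X}).
\]

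Next I would apply the Clarke sum rule for Lipschitz functions to obtain
\[
0 \in \partial\widetilde{\varphi}(\hat{x}) + K\bar{C}\sum_{t=1}^T \partial \bigl\| \Cdot_t - \Eb_t[\Cdot_t]\bigr\|(\hat{x}) + K\bar{C}\,\partial\bigl[\dist(F(\Cdot),\widetilde{Y})\bigr](\hat{x}) + K\bar{C}\,\partial[\dist(\Cdot,\widetilde{X})](\hat{x}).
\]
The third subdifferential is computed by Theorem~\ref{t:Clarke-incl}, yielding $[F'(\hat{x})]^*\widetilde{\psi}$ with $\widetilde{\psi}_t \in N_{\widetilde{Y}_t}(F_t(\hat{x}_{1:t}))$; the fourth gives $\tilde{n}_t \in N_{\widetilde{X}_t}(\hat{x}_t)$ by decomposability. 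For the nonanticipativity term, observe that the map $x_t \mapsto x_t - \Eb_t[x_t]$ is a continuous linear projection onto the subspace $\Nc_t^\perp = \{v_t \in \widetilde{\Xc}_t : \Eb_t[v_t] = 0\}$, so its norm is the distance from $x_t$ to the closed subspace $\Nc_t = \{x_t : x_t = \Eb_t[x_t]\}$. At $\hat{x}_t \in \Nc_t$, the subdifferential of this distance is exactly $\Nc_t^\perp \cap \mathbb{B}_{\widetilde{\Xc}_t^*}$, which gives precisely the elements $\lambda_t$ satisfying \eqref{lambda-normal}, since the adjoint condition $\langle \lambda_t, x_t\rangle = 0$ for all $\Fc_t$-measurable $x_t$ is equivalent to $\Eb_t[\lambda_t] = 0$.

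It remains to unfold $[F'(\hat{x})]^*\widetilde{\psi}$ componentwise. Because now the decision spaces $\widetilde{\Xc}_t$ are already $\Fc$-measurable (not $\Fc_t$-measurable), the block-triangular identity
\[
\langle A^*\widetilde{\psi}, h\rangle = \sum_{t=1}^T \sum_{\ell=1}^t \langle \widetilde{\psi}_t, A_{t,\ell} h_\ell\rangle = \sum_{\ell=1}^T \Bigl\langle \sum_{t=\ell}^T A_{t,\ell}^\top \widetilde{\psi}_t,\, h_\ell\Bigr\rangle
\]
identifies $(A^*\widetilde{\psi})_t = \sum_{\ell=t}^T A_{\ell,t}^\top \widetilde{\psi}_\ell$ with no inner conditional expectation. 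Absorbing $K\bar{C}$ into $\widetilde{\psi}$, $\lambda$, and $\tilde n$ (which only re-scales the normal cone elements) yields \eqref{Fermat-msp-2}.

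The main obstacle I anticipate is a careful identification of the subdifferential of $\|\Cdot - \Eb_t[\Cdot]\|$ at a point of $\Nc_t$ as precisely the unit ball of the annihilator $\Nc_t^\perp$ in the $\Lc_q$-pairing, and translating ``element of the annihilator'' into the pointwise condition $\Eb_t[\lambda_t]=0$. This is a clean duality/projection argument in $\Lc_p$, but it is where the distinctive feature of this formulation—relative to the built-in one—enters. The rest mirrors the proof of Theorem~\ref{t:sp-convex-simple}, with the simplification that no tower-of-conditional-expectations argument on the adjoint is needed because the enlarged spaces already carry full $\Fc$-measurability.
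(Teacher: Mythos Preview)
Your approach matches the paper's almost exactly: exact penalization via subregularity (Theorem~\ref{t:mustistage-subregularity-n}), Clarke sum rule, Theorem~\ref{t:Clarke-incl} for the $\dist(F(\cdot),\widetilde{Y})$ term, and the block-triangular computation of $[F'(\hat x)]^*$ with no conditional expectations because all spaces carry the full $\sigma$-algebra.

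There is one slip in the nonanticipativity step. The claim that $\|x_t-\Eb_t[x_t]\|=\dist(x_t,\Nc_t)$ is false for $p\neq 2$: conditional expectation is a linear projection but not the \emph{metric} projection in $\Lc_p$, so $\Eb_t[x_t]$ is generally not the nearest $\Fc_t$-measurable point to $x_t$. The paper sidesteps this by computing the subdifferential of $\|(I-\Pi)x\|$ directly via the chain rule at a point where the value is zero: since $\partial\|\cdot\|(0)=\mathbb{B}_{\widetilde{\Xc}^*}$, one gets $(I-\Pi^*)\mathbb{B}_{\widetilde{\Xc}^*}$, and the tower property gives $\Pi^*=\Pi$, so every element $\lambda$ in this set satisfies $\Pi\lambda=0$, i.e.\ $\Eb_t[\lambda_t]=0$. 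Your conclusion about the form of $\lambda_t$ is therefore correct, and you rightly flagged this as the delicate point; just replace the ``distance to $\Nc_t$'' justification with the chain-rule argument above.
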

\begin{proof}
We follow a similar line of argument as in Theorem \ref{t:sp-convex-simple}.
Using the Lipschitz continuity of $\widetilde{\varphi}(\cdot)$ about $\hat{x}$ with some Lipschitz constant $L_\varphi$, we infer that, for every $K> L_\varphi$, the point $\hat{x}$ is a local minimum of the function
\[
\widetilde{\varphi}(x) + K \, \dist(x, \widetilde{X} \cap F^{-1}(\widetilde{Y})\cap \Nc).
\]
 We define the linear operator $\Pi:\widetilde{\Xc}\to \widetilde{\Xc}$, by
\begin{equation}
\label{Pi}
\Pi (x_1,\dots, x_T) =  (\Eb_1[x_1],\dots,\Eb_T[x_T]).
\end{equation}
Theorem \ref{t:mustistage-subregularity-n} implies that the system \eqref{spn-2n}--\eqref{spn-4n} is metrically subregular with some constant $\bar{C}$.
Consequently, $\hat{x}$ is a local minimum of the function
\[
\widetilde{\varphi}(x) + K \bar{C} \big(  \dist(F(x), \widetilde{Y}) + \dist(x, \widetilde{X}) + \|x-\Pi x\| \big).
\]
We use  necessary conditions of optimality for Lipschitz continuous functions:
\[
 0 \in \partial \varphi(\hat{x}) + K \bar{C}\, \partial_x  \big[\dist(F(\hat{x}), \widetilde{Y})\big]
 + K \bar{C}\, \partial \big[\dist(\hat{x}, \widetilde{X})\big]+  K \bar{C}\, \partial \|\hat{x}-\Pi \hat{x}\|.
\]
By virtue of Theorem \ref{t:Clarke-incl}, the subdifferential of the function $\dist(F(\cdot), \widetilde{Y})$
it is equal to $\big[F'(\hat{x})\big]^* \, \big(N_{\widetilde{Y}}(F(\hat{x})) \cap \mathbb{B}_{\widetilde{\Yc}^*}\big)$.
The subdifferential of $\dist(\hat{x}, \widetilde{X})$ is $N_{\widetilde{X}}(\hat{x})\cap \mathbb{B}_{\widetilde{\Xc}^*} $. The subdifferential of
the last term is $(I-\Pi^*)\mathbb{B}_{\widetilde{\Xc}^*}$. Using the tower property of conditional expectations, we see that
\[
\Pi^* (v_1,\dots, v_T) =  (\Eb_1[v_1],\dots,\Eb_T[v_T]).
\]
Therefore,
\[
\partial \|\hat{x}-\Pi \hat{x}\| = (I-\Pi^*)\mathbb{B}_{\widetilde{\Xc}^*} = [\text{ker}(\Pi^*)]\cap \mathbb{B}_{\widetilde{\Xc}^*}.
\]
Summing up, it follows that a subgradient
$\tilde{g}\in \partial \widetilde{\varphi}(\hat{x})$, an element $\widetilde{\psi}\in N_{\widetilde{Y}}(F(\hat{x}))$,  a normal vector $\tilde{n}\in N_{\widetilde{X}}(\hat{x})$, and a multiplier $\lambda\in \text{ker}(\Pi^*)$
exist, such that
\[
\tilde{g} + \lambda + \big[F'(\hat{x})\big]^* \widetilde{\psi} + \tilde{n} = 0.
\]
The condition $\lambda\in \text{ker}(\Pi^*)$ is equivalent to \eqref{lambda-normal}.
Equations \eqref{Fermat-msp-2} can now be derived as in the proof of Theorem \ref{t:sp-convex-simple},
using the block-triangular form of $A =  F'(\hat{x})$, and equation
\eqref{shuffle} for any $h\in \widetilde{\Xc}$.
Since both spaces, $\widetilde{\Yc}_t^*$ and $\widetilde{\Xc}_\ell^*$,
are defined with the use of the full $\sigma$-algebra $\Fc$, we simply have $A^*_{t,\ell}= A_{t,\ell}^{\top}$.
That is why no conditional expectation appears in \eqref{Fermat-msp-2}.
 \end{proof}

It may be of interest to explore the relations of two sets of optimality conditions of Theorems \ref{t:sp-convex-simple} and \ref{t:sp-convex-simple-2}.
\begin{corollary}
\label{c:Fermat-relations}
The subgradient $\hat{g}\in \partial \varphi(\hat{x})$ given by
$\hat{g}_t = \Eb_t[\tilde{g}_t]$, $t=1,\dots,T$, together with the multipliers $\hat{\psi}_t = \Eb_t[\widetilde{\psi}_t]$, $t=2,\dots,T$, and normal vectors $\hat{n}_t = \Eb_t[\tilde{n}_t]$ satisfy
the optimality conditions \eqref{Fermat-msp}.
\end{corollary}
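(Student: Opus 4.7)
The plan is to apply the $\Fc_t$-conditional expectation to both sides of \eqref{Fermat-msp-2} and identify each term with the corresponding one in \eqref{Fermat-msp}. Since $\Eb_t[\lambda_t]=0$ is exactly \eqref{lambda-normal}, the nonanticipativity multiplier disappears. For the partial-Jacobian sum, the key observation is that $A_{\ell,t}(\omega)$, defined in \eqref{e:A_tofomega}, depends only on $\hat{x}_{1:\ell}(\omega)$ and $\omega$, hence is $\Fc_\ell$-measurable for every $\ell\ge t$. When $\ell=t$, this gives immediately $\Eb_t[A_{t,t}^{\top}\widetilde{\psi}_t]=A_{t,t}^{\top}\hat{\psi}_t$. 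When $\ell>t$, the tower property together with $\Fc_\ell$-measurability yields
\[
\Eb_t[A_{\ell,t}^{\top}\widetilde{\psi}_\ell]
 =\Eb_t\bigl[\Eb_\ell[A_{\ell,t}^{\top}\widetilde{\psi}_\ell]\bigr]
 =\Eb_t\bigl[A_{\ell,t}^{\top}\Eb_\ell[\widetilde{\psi}_\ell]\bigr]
 =\Eb_t[A_{\ell,t}^{\top}\hat{\psi}_\ell],
\]
exactly reproducing the inner sum in \eqref{Fermat-msp}.

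Next, I would verify that $\hat{\psi}_t$, $\hat{n}_t$, and $\hat{g}$ lie in the correct sets. Because $Y_t$ and $X_t$ are decomposable with closed convex $\Fc_t$-measurable pointwise values, Lemmas \ref{l:decomposable-tangent} and \ref{l:polar-decomposability} characterize $N_{Y_t}(F_t(\hat{x}_{1:t}))$ and $N_{X_t}(\hat{x}_t)$ as the $\Fc_t$-measurable selections of the corresponding pointwise normal cones. Membership in these convex-valued $\Fc_t$-measurable multifunctions is preserved under $\Eb_t[\cdot]$ (a standard Hiai--Umegaki type property of conditional expectation for convex-valued multifunctions), so that $\hat{\psi}_t\in N_{Y_t}(F_t(\hat{x}_{1:t}))$ and $\hat{n}_t\in N_{X_t}(\hat{x}_t)$. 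For the subgradient component, I would invoke the explicit extension $\widetilde{\varphi}=\varphi\circ \Pi$ introduced in \S\ref{s:nonanticipativity}: the Clarke chain rule for Lipschitz compositions with the continuous linear map $\Pi$ gives $\partial \widetilde{\varphi}(\hat{x})\subseteq \Pi^*\partial \varphi(\Pi\hat{x})=\Pi^*\partial \varphi(\hat{x})$, so $\tilde{g}=\Pi^* g$ for some $g\in \partial \varphi(\hat{x})$. The idempotence $\Pi^2=\Pi$ and hence $(\Pi^*)^2=\Pi^*$ then force $\hat{g}=\Pi^*\tilde{g}=\Pi^* g=\tilde{g}$, i.e.\ $\tilde{g}$ is already $\Fc_t$-adapted componentwise, and each $\hat{g}_t$ is the $t$-th coordinate of an element of $\partial \varphi(\hat{x})$.

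Assembling these three observations and taking $\Eb_t$ of \eqref{Fermat-msp-2} reproduces \eqref{Fermat-msp} termwise, with multipliers and normal vectors of the required form. The main obstacle is the measurable-selection step for the normal cones: one must justify that the conditional expectation of an almost-sure selection of an $\Fc_t$-measurable closed convex cone-valued multifunction is again such a selection, which is nontrivial precisely because the primal data $F_t(\hat{x}_{1:t})$, $\hat{x}_t$ are $\Fc_t$-adapted while the dual variables $\widetilde{\psi}_t$, $\tilde{n}_t$ live in spaces defined with the full $\sigma$-algebra $\Fc$. Everything else reduces to careful bookkeeping based on the tower property and the block-triangular structure of $A=F'(\hat{x})$ recorded in \eqref{shuffle}.
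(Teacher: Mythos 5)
Your core computation is the same as the paper's: apply $\Eb_t$ to \eqref{Fermat-msp-2}, use \eqref{lambda-normal} to remove $\lambda_t$, and pull the $\Fc_\ell$-measurable factor $A_{\ell,t}^{\top}$ through $\Eb_\ell$ by the tower property so that $\Eb_t[A_{\ell,t}^{\top}\widetilde{\psi}_\ell]=\Eb_t[A_{\ell,t}^{\top}\hat{\psi}_\ell]$ for $\ell\ge t$. That part is correct and matches the paper. Where you diverge is in verifying that the averaged dual elements lie in the required sets, and there the paper's route is both lighter and more general. For $\hat{n}_t$ and $\hat{\psi}_t$ the paper never passes through the pointwise normal cones: since $N_{X_t}(\hat{x}_t)$ in \eqref{Fermat-msp} is the normal cone taken in the adapted space $\Xc_t$, one only needs to test against adapted points $x_t\in X_t$, and for those
\[
\langle \tilde{n}_t, x_t-\hat{x}_t\rangle=\langle \tilde{n}_t, \Eb_t[x_t-\hat{x}_t]\rangle=\langle \Eb_t[\tilde{n}_t], x_t-\hat{x}_t\rangle\le 0 ,
\]
and likewise for $\tilde{g}$ and $\widetilde{\psi}_t$. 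The ``Hiai--Umegaki type'' step you single out as the main obstacle --- that $\Eb_t$ maps selections of an $\Fc_t$-measurable closed convex cone-valued multifunction to selections of the same multifunction --- is therefore not needed at all. Your route through Lemmas \ref{l:decomposable-tangent}--\ref{l:polar-decomposability} plus that conditional-Jensen property for multifunctions can be made to work, but it imports machinery the paper never develops, and you would have to actually prove that property rather than gesture at it.

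The one genuine gap is your treatment of $\hat{g}$. You hardwire the specific extension $\widetilde{\varphi}=\varphi\circ\Pi$, but the paper introduces that formula only as \emph{one example}; Theorem \ref{t:sp-convex-simple-2} and hence the corollary are stated for an arbitrary Lipschitz extension $\widetilde{\varphi}$ agreeing with $\varphi$ on $\Nc$. Your chain-rule argument $\partial\widetilde{\varphi}(\hat{x})\subseteq\Pi^*\partial\varphi(\hat{x})$ (and the ensuing conclusion $\hat{g}=\tilde{g}$) proves the claim only for that particular extension, so as written it does not establish the corollary. The fix is the same self-adjointness identity as above: for adapted $x$ one has $\sum_t\langle\tilde{g}_t,x_t-\hat{x}_t\rangle=\sum_t\langle\Eb_t[\tilde{g}_t],x_t-\hat{x}_t\rangle$, so $\hat{g}$ inherits the subgradient inequality of $\tilde{g}$ restricted to $\Nc\cong\Xc$, with no assumption on how $\widetilde{\varphi}$ was extended. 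With that replacement, and with the normal-cone memberships handled by the duality argument, your proof collapses to the paper's.
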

\begin{proof}
We take the conditional expectation of both sides of a typical relation in \eqref{Fermat-msp-2}, first with respect to
with respect to $\Fc_t$. Since $\Eb_t[\pi_t]=0$, using the tower property and $\Fc_\ell$-measurability of $A_{\ell,t}$,
 we obtain
\[
0= \Eb_t\big[\tilde{g}_t] +
\Eb_t\bigg[ \sum_{\ell=t}^T A_{\ell,t}^{\top}\widetilde{\psi}_{\ell}\bigg] + \Eb_t\big[\tilde{n}_t\big] =
\Eb_t\big[\tilde{g}_t\big] +
\Eb_t\bigg[ \sum_{\ell=t}^T A_{\ell,t}^{\top}\Eb_\ell[\widetilde{\psi}_{\ell}]\bigg] + \Eb_t\big[\tilde{n}_t\big].
 \]
We shall verify that $\hat{g}$ is a subgradient of $\varphi(\cdot)$ at $\hat{x}$.
Having in mind that $\tilde{g}\in \partial \widetilde{\varphi}(\hat{x})$, for any $x\in \Xc$, we have
\[
\varphi(x) - \varphi(\hat{x}) \ge \sum_{t=1}^T \langle \tilde{g}_t, x_t - \hat{x}_t \rangle =
\sum_{t=1}^T \big\langle \tilde{g}_t, \Eb_t[x_t - \hat{x}_t] \big\rangle
= \sum_{t=1}^T \big\langle \Eb_t[\tilde{g}_t], x_t - \hat{x}_t \big\rangle,
\]
and, thus, $\hat{g} \in \partial \varphi(\hat{x})$.

In a similar way, if $\tilde{n}_t\in {N}_{\widetilde{X}_t}(\hat{x}_t)$, then, for every $x_t\in X_t$, we have
\[
0 \ge \langle \tilde{n}_t, x_t - \hat{x}_t\rangle = \big\langle \tilde{n}_t, \Eb_t[x_t - \hat{x}_t] \big\rangle
= \big\langle \Eb_t[\tilde{n}_t],  x_t - \hat{x}_t  \big\rangle.
\]
This proves that $\hat{n}_t\in {N}_{X_t}(\hat{x}_t)$, $t=1,\dots,T$. In a similar way, we obtain
$\hat{\psi}_t\in {N}_{{Y}_t}(F_t(\hat{x}_{1:t}))$ for $t=1,\dots,T$.
 \end{proof}

\bibliographystyle{plain}

\end{document}